\newtheorem{theorem}{Theorem}
 \newtheorem{step}{Step}
   \newtheorem{example}{Example}
\newtheorem{proposition}{Proposition}
 \newtheorem{problem}{Problem}
  \newtheorem{assumption}{Assumption}
    \newcommand{\bnu}{\mbox{\boldmath{$\nu$}}}
  \newcommand{\bx}{\mbox{\boldmath{$x$}}}
  \date{ }
\begin{document}

\title{\bf Solvability and  optimization for a class of mixed variational problems}

  {\author{ Andaluzia Matei$^{1}$ \footnote{This is a preprint. To cite the final version: Andaluzia Matei, Mircea Sofonea (2019) Solvability and optimization for a class of mixed variational problems, Optimization, DOI: 10.1080/02331934.2019.1676242
}\,  and \, {Mircea Sofonea$^{2}$}\\[5mm]
  		{\it \small $^1$ Department of Mathematics, University of Craiova}\\ [-5mm]
  		{\it\small	A.I. Cuza 13, 200585, Craiova, Romania}\\[-5mm]
  		{\it\small	E-mail:andaluziamatei@inf.ucv.ro}\\[0mm]
  		{\it \small $^2$ Laboratoire de Math\'ematiques et Physique}\\ [-5mm]
  		{\it \small	University of Perpignan Via Domitia}\\[-5mm]
  		{\it\small 52 Avenue Paul Alduy, 66860 Perpignan, France}\\ [-5mm]
  		{\it\small	E-mail: sofonea@univ-perp.fr}	
}}

 \maketitle

\begin{abstract}

We consider an abstract mixed variational problem governed by a nonlinear operator $A$ and a bifunctional $J$, in a real reflexive Banach space $X$. The operator $A$ is assumed to be continuous, Lipschitz continuous on each bounded subset of $X,$ and generalized monotone. First, we pay attention to the unique solvability of the problem. Next,	we prove a continuous dependence result of the solution with respect to the data. Based on this result we prove the existence of at least one solution for an associated optimization problem. Finally, we apply our abstract results to the well-posedness and the optimization of an antiplane frictional contact model for nonlinearly elastic materials of Hencky-type.
\end{abstract}

\vskip5mm
\noindent\textbf{Key words :}\ \  mixed variational problem,\ Lagrange multiplier,\ optimization problem,
Hencky material, antiplane frictional contact problem, weak solution.

\vskip5mm

 \noindent\textbf{AMS subject classification}:
 35J65, 49J20, 65K10, 49K27, 49J40, 74M10, 74M15.

\section{Introduction}

Mixed variational formulations arise in the analysis of various nonlinear boundary value problems  which appear in
Solid, Fluid and Contact Mechanics, and in various Engineering Applications, as well. Their solvability is based on arguments of saddle point, monotonicity, convexity and fixed-point techniques, among others. Existence and uniqueness results can be found in
\cite{Cea, GLT,HHN96,HHNL,Reddy92}, for instance.  Owing a specific structure and involving Lagrange multipliers, the numerical treatment of
mixed variational problems  is efficient and accurate. Reference in the field
include \cite{AHLMR,BMS,HR,BRB,HMW}, among others.

Recently, in \cite{SMX} we considered a mixed variational problem in a real Hilbert space $X$, governed by a strongly monotone  Lipschitz continuous operator $A$ and a completely continuous operator $\pi.$ The main result in \cite{SMX} was  the continuous dependence of the solution with respect to the data. A more general mixed variational problem
was considered in \cite{CAMWA}, in the framework of  a real reflexive Banach space. There, the existence of at least one solution of the problem was proved, under the assumptions that the operator $A$ is generalized monotone and hemicontinuous.

The present paper represents a continuation of our previous papers \cite{CAMWA,SMX}. Its main novelty arises in the fact that here we extend the continuous dependence  result in \cite{SMX} to the generalized mixed variational problem in \cite{CAMWA}, assuming that the operator $A$ is continuous, Lipschitz continuous on each bounded subset of $X,$ and generalized monotone. Note that such an operator could fail to be globally Lipschitz continuous. Following a technique developed in \cite{SMX}, we apply our new results to study  the solvability of an associated optimization problem. Finally, we use the abstract results in the study of the well-posedness and  optimization of a  nonlinear boundary value problem which describes an antiplane frictional contact problem with elastic materials of Hencky-type, see, e.g., \cite{MMN18} and the references therein.

\medskip
The abstract problem under consideration can be stated as follows.

\begin{problem}\label{ap} Find $u\in X$ and $\lambda\in \Lambda\subset Y$ such that
\begin{eqnarray}
(Au,v-u)_{X',X}+b(v-u,\lambda)+J(u,v-u)&\geq& (f,\,\pi(v-u))_{Z',Z}\quad
    \mbox{\rm for all }\, v\in X,\label{pp:1}\\[2mm]
b(u,\mu-\lambda)&\leq& 0\qquad\qquad\qquad\,\,\,\,\quad\mbox{\rm for all }\, \mu\in \Lambda.\label{pp:2}
 \end{eqnarray}
\end{problem}
\noindent

\medskip
Here $(X,\|\cdot\|_X),$  $(Y,\|\cdot\|_Y)$  and $(Z,\|\cdot\|_Z)$ are real reflexive Banach spaces,
$(\cdot,\cdot)_{X',X}$ denotes
the duality pairing between $X$ and its dual $X'$ and  $(\cdot,\cdot)_{Z',Z}$ denotes
the duality pairing between $Z$ and its dual $Z'$.
Moreover, $A:X\to X'$, $b:X\times Y\to \mathbb R$, $J:X\times X\to\mathbb{R}$, $f\in Z'$, $\pi:X\to Z$ and $\Lambda\subset Y$ are given. Below in this paper we  use $``\to"$ and $``\rightharpoonup"$ for the strong and weak convergence in various normed spaces that will be specified and, unless stated otherwise, all the limits, upper limits and lower limits  are considered when $n\to+\infty$.

\medskip
In the study of Problem \ref{ap} we consider the following assumptions.


\begin{assumption}\label{a4} There
exists a functional
$h:X\rightarrow \mathbb R$ such that:

$(i_1)$\ $h(tw)=t^r h(w)$ for all
$t>0, \,w\in X$, with a given $r>1;$

$(i_2)$\ $(Av-Au,v-u)_{X',X}\geq h(v-u)$ for all $u,v\in X;$

$(i_3)$\ if $\{u_n\}\subset X$ is a sequence such that $u_n\rightharpoonup u$ in $X$, then $h(u)\leq \limsup h(u_n).$
\end{assumption}

\begin{assumption}\label{a4bis} The operator $A:X\to X'$ is continuous.
\end{assumption}


\begin{assumption}\label{coercive}
	\ $\displaystyle\frac{(Au,u)_{X',X}}{\|u\|_X}\to \infty\quad\mbox{ \rm as }\quad\|u\|_X\to \infty.$
\end{assumption}

 \begin{assumption} \label{a5}The form $b:X\times Y\to \mathbb R$ is bilinear. In addition:

 $(i_1)$  for each sequence $\{u_n\}\subset X$ such that $u_n\rightharpoonup u$ in $X$  we have $b(u_n,\mu)\to b(u,\mu)$,  for all $\mu\in Y;$

$(i_2)$ for each sequence $\{\lambda_n\}\subset Y$ such that $\lambda_n\rightharpoonup \lambda$ in $Y$ we have $b(v,\lambda_n)\to b(v,\lambda)$, for all $v\in X$;

$(i_3)$  $b$ satisfies the so-called ``inf-sup" condition, i.e., there exists $\alpha> 0$ such that \begin{equation}\displaystyle\inf_{\mu\in Y, \mu \neq 0_{Y}}\,\sup_{v\in X, v\neq 0_{X}}\,\displaystyle\frac{b(v,\mu)}{\|v\|_{X}\,\|\mu\|_{Y}}\geq \alpha.\label{is}
\end{equation}
\end{assumption}
\begin{assumption}\label{J}  The function $J:X\times X\to\mathbb{R}$ is such that:

$(i_1)$\	for every $u\in X,$ the application $X \ni v\to J(u,v)\in \mathbb R$ is
positively homogeneous and subadditive on $X$;

$(i_2)$\ there exists $c>0$  such that
\begin{equation}\label{iyu}
|J(u,v)|\leq c\|v\|_X\qquad\mbox{\rm  for all }\ u,\, v\in X;
\end{equation}

$(i_3)$\
the application $X\times X \ni (u,v)\to J(u,v)\in \mathbb R$ is weakly upper semicontinuous, i.e.,
if $\{u_n\}\subset X$,  $\{v_n\}\subset X$ are such that  $u_n\rightharpoonup u$, $v_n\rightharpoonup v$ in $X$,  then \[\limsup J(u_{n},v_{n})\leq J(u,v).\]
\end{assumption}
\begin{assumption}\label{ef}\  $f\in Z'$.
\end{assumption}
\begin{assumption}\label{a3bbiss} $\Lambda$ is a closed convex bounded subset of $Y$ such that $0_Y\in \Lambda$.
\end{assumption}

\begin{assumption}\label{pi} The operator $\pi$ is a linear and continuous operator.
\end{assumption}

The following existence result is a straightforward consequence of an existence result obtained in the recent paper \cite{CAMWA}.
\begin{theorem}\label{maini}
	Under  Assumptions $\ref{a4}$--$\ref{pi}$  there exists at least one solution to Problem $\ref{ap}$.
\end{theorem}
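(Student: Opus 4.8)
The plan is to obtain Theorem~\ref{maini} by verifying that Assumptions~\ref{a4}--\ref{pi} imply the full set of hypotheses used in the existence theorem of \cite{CAMWA}, since the theorem is advertised as a ``straightforward consequence'' of that earlier result. The paper \cite{CAMWA} proves existence for Problem~\ref{ap} under the assumptions that $A$ is generalized monotone and hemicontinuous (together with suitable coercivity and the conditions on $b$, $J$, $\Lambda$, and $\pi$). Thus the entire task reduces to checking that our stronger standing assumptions cover those weaker structural hypotheses, and then simply invoking the cited existence result verbatim.

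The first step is to record that Assumption~\ref{a4bis}, namely the continuity of $A:X\to X'$, immediately yields hemicontinuity, since hemicontinuity only requires that $t\mapsto (A(u+tv),w)_{X',X}$ be continuous on $[0,1]$ for each fixed $u,v,w\in X$, and norm-to-norm continuity of $A$ trivially implies this. The second step is to argue that Assumption~\ref{a4} delivers the generalized monotonicity needed in \cite{CAMWA}: condition $(i_2)$ gives the lower bound $(Av-Au,v-u)_{X',X}\geq h(v-u)$, while $(i_1)$ fixes the homogeneity $h(tw)=t^r h(w)$ with $r>1$, and $(i_3)$ provides the weak-limit inequality $h(u)\leq \limsup h(u_n)$; together these encode exactly the generalized monotonicity property in the sense of \cite{CAMWA}. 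In particular one notes that $h(0)=0$ follows from $(i_1)$ by letting $t\to 0^+$, so taking $u=v$ is consistent, and the homogeneity degree $r>1$ is what allows the later coercivity and compactness arguments of \cite{CAMWA} to run.

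The remaining ingredients match one-to-one: Assumption~\ref{coercive} is the coercivity condition, Assumption~\ref{a5} supplies the bilinearity of $b$, its separate weak continuity in each argument $(i_1)$--$(i_2)$, and the inf--sup condition $(i_3)$ that is standard for controlling the Lagrange multiplier $\lambda\in\Lambda$; Assumption~\ref{J} gives the positive homogeneity, subadditivity, boundedness \eqref{iyu}, and weak upper semicontinuity of $J$; Assumption~\ref{ef} places $f\in Z'$; Assumption~\ref{a3bbiss} makes $\Lambda$ closed, convex, bounded, and containing $0_Y$; and Assumption~\ref{pi} gives linearity and continuity of $\pi$. With all hypotheses of the existence theorem in \cite{CAMWA} verified, the conclusion follows at once.

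I do not expect any genuine obstacle here, precisely because the theorem is a corollary rather than an independent result; the only point requiring care is the bookkeeping that our hypotheses are \emph{at least} as strong as those in \cite{CAMWA}. The single place where one must be slightly attentive is confirming that ``continuous'' (Assumption~\ref{a4bis}) is not weaker than the hemicontinuity actually invoked in \cite{CAMWA}---but since continuity implies hemicontinuity, there is no gap. Hence the proof amounts to citing \cite{CAMWA} after this verification.
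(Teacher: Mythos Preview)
Your proposal is correct and matches the paper's treatment: the paper does not give a proof of Theorem~\ref{maini} at all, stating only that it is a ``straightforward consequence'' of the existence result in \cite{CAMWA}. Your verification that Assumptions~\ref{a4}--\ref{pi} imply the hypotheses of that cited result (in particular that continuity implies hemicontinuity) is exactly the bookkeeping needed to justify this reduction, and in fact supplies more detail than the paper itself.
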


Note that Theorem \ref{maini} guarantees the solvability of Problem $\ref{ap}$. Nevertheless, it leaves open a number of questions like
the uniqueness of the solution and its continuous dependence on the data $A$, $b$, $J$, $f$, $\Lambda$, which represent crucial tools in the study of associated optimization and optimal control problems.
The aim of this paper is to fill this gap. Thus, in Section \ref{s2} we provide sufficient assumptions on the data which guarantee the uniqueness of the solution to  Problem $\ref{ap}$. Moreover, we study the boundedness of the solution. We use these results in
Section \ref{cr} where we prove a continuous dependence result of the solution with respect to the data. The proof follows the technique in \cite{SMX}, based on arguments of monotonicity, compactness and Mosco convergence.  Then, we use this continuous dependence result
to provide the existence of minimizers for an associated optimization problem. Our abstract results in this paper can be applied in the study of a large number of boundary value problems. To provide an example, we use them in Section \ref{apli}
in the study of a nonlinear problem governed by the $r$-Laplace operator.

We end this section recalling that comprehensive results on optimization and optimal control theory can be found in \cite{Ba,Cea,Li68,NST,Ti,Tiba}. For various results concerning the optimal control of variational and hemivariational inequalities we refer the reader  to, e.g., \cite{F,LZ,Mig,MP, S2}. The current paper completes part of the results in the aforementioned references, since here we deal with the minimization of cost functionals associated to mixed variational problems.

\section{Uniqueness and bounds}
\label{s2}

In this section we provide the uniqueness of the solution $(u,\lambda)$ of Problem \ref{ap} together with bounds for $u$ and $\lambda$ too. To this end, we consider the following additional assumptions.
\begin{assumption}\label{a7}
There exist $M>0$ and $q\geq 2$ such that
\begin{equation}
 h(v)\geq M\,\|v\|^q_X\,\,\,\,\,\,\,\mbox{\rm for all }\,\,v \in X.\label{hh}
\end{equation}
\end{assumption}

\begin{assumption}\label{a7bis} There exists $m\geq 0$ such that
\begin{equation}\label{iJ}
J(w_1,w_2-w_1)+J(w_2,w_1-w_2)\leq m\|w_1-w_2\|_X^q\quad \mbox{ \rm for all \ }w_1,\,w_2\in X,
\end{equation}
where $q$ is the constant which appears in $(\ref{hh})$.
\end{assumption}

\begin{assumption}\label{sa} $M > m$.
\end{assumption}

Our first result in this section is the following.

\begin{theorem}\label{ufa}
Under  Assumptions 
$\ref{a4}$--$\ref{sa}$
the solution of
Problem $\ref{ap}$ is unique  in its first argument.
\end{theorem}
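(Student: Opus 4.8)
The plan is the standard monotonicity argument for uniqueness in variational inequalities, adapted to the presence of the Lagrange multiplier and the bifunctional $J$. Suppose $(u_1,\lambda_1)$ and $(u_2,\lambda_2)$ are two solutions of Problem \ref{ap}. First I would test inequality (\ref{pp:1}) written for $(u_1,\lambda_1)$ with $v=u_2$, and inequality (\ref{pp:1}) written for $(u_2,\lambda_2)$ with $v=u_1$, and then add the two resulting inequalities. Because $\pi$ is linear (Assumption \ref{pi}), we have $\pi(u_2-u_1)=-\pi(u_1-u_2)$, so the two terms involving $f$ cancel and the summed right-hand side vanishes.

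The left-hand side then splits into three groups. The operator terms combine into $-(Au_1-Au_2,u_1-u_2)_{X',X}$, which by Assumption \ref{a4}$(i_2)$ (applied with $v=u_1$, $u=u_2$) is bounded above by $-h(u_1-u_2)$. The $J$-terms are precisely $J(u_1,u_2-u_1)+J(u_2,u_1-u_2)$, which by Assumption \ref{a7bis} is bounded above by $m\|u_1-u_2\|_X^q$. Both of these estimates are routine.

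The crucial and least automatic step is the treatment of the two bilinear terms $b(u_2-u_1,\lambda_1)+b(u_1-u_2,\lambda_2)$. Here I would return to the second inequality (\ref{pp:2}): testing it for the first solution with $\mu=\lambda_2\in\Lambda$ gives $b(u_1,\lambda_2-\lambda_1)\le 0$, and testing it for the second solution with $\mu=\lambda_1\in\Lambda$ gives $b(u_2,\lambda_1-\lambda_2)\le 0$. Using bilinearity of $b$, the sum of the two $b$-terms equals $b(u_1-u_2,\lambda_2-\lambda_1)=b(u_1,\lambda_2-\lambda_1)-b(u_2,\lambda_2-\lambda_1)$, and the two inequalities just obtained show this quantity is $\le 0$. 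This is the only place where the multipliers enter and where the compatibility between the two lines of the mixed formulation is exploited; it also explains why the argument delivers uniqueness of $u$ only and says nothing about $\lambda$.

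Collecting the three estimates, and recalling that the summed right-hand side is zero, I obtain $0\le -h(u_1-u_2)+m\|u_1-u_2\|_X^q$, hence $h(u_1-u_2)\le m\|u_1-u_2\|_X^q$. Finally, Assumption \ref{a7} gives $M\|u_1-u_2\|_X^q\le h(u_1-u_2)$, so that $(M-m)\|u_1-u_2\|_X^q\le 0$; since $M>m$ by Assumption \ref{sa}, this forces $\|u_1-u_2\|_X=0$, i.e.\ $u_1=u_2$. I do not expect any serious obstacle beyond careful bookkeeping of signs in the addition step; the single genuinely structural point is the multiplier cancellation via (\ref{pp:2}).
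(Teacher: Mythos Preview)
Your proposal is correct and follows essentially the same approach as the paper's proof: test \eqref{pp:1} for each solution with the other's first component, add, eliminate the $b$-terms using \eqref{pp:2} with $\mu=\lambda_2$ and $\mu=\lambda_1$ respectively, and conclude via Assumptions \ref{a4}$(i_2)$, \ref{a7}, \ref{a7bis}, \ref{sa}. Your exposition is in fact slightly more detailed than the paper's (you make the cancellation of the $f$-terms via linearity of $\pi$ explicit), but the argument is the same.
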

\begin{proof}
Let  $(u_1,\lambda_1)$ and $(u_2,\lambda_2)$ be two solutions  of Problem $\ref{ap}$. We write
(\ref{pp:1}) with $u=u_1$,
$\lambda=\lambda_1$ and $v=u_2$,  then with $u=u_2$,
$\lambda=\lambda_2$ and $v=u_1$. We add the resulting inequalities to obtain that
\begin{eqnarray*}
&&(Au_1-A u_2,u_1-u_2)_{X',X}\le b(u_2,\lambda_1-\lambda_2)+b(u_1,\lambda_2-\lambda_1)\\ [2mm]
&&\qquad+J(u_1,u_2-u_1)+J(u_2,u_1-u_2).
\end{eqnarray*}
Next, since (\ref{pp:2}) implies that $b(u_2,\lambda_1-\lambda_2)\le 0$ and $b(u_1,\lambda_2-\lambda_1)\le 0$, we deduce that
\begin{eqnarray*}
	&&(Au_1-A u_2,u_1-u_2)_{X',X}\le J(u_1,u_2-u_1)+J(u_2,u_1-u_2).
\end{eqnarray*}
This last inequality together with Assumptions \ref{a4} $(i_2)$, \ref{a7} and \ref{a7bis} yields
\[(M-m)\|u_1-u_2\|_X^q\leq 0.\]
Therefore, Assumption \ref{sa} implies that $u_1=u_2$.
\end{proof}

In order to prove the uniqueness in the second argument, we need the following additional assumption.


\begin{assumption}\label{unic}
$J(u,v)+J(u,-v)\leq0\quad\mbox{\rm  for all }\ u,\,v\in X.$
\end{assumption}

Our second result in this section is the following.

\begin{theorem}\label{t1}  Under Assumptions $\ref{a4}$--$\ref{unic}$, 
Problem $\ref{ap}$ has a unique solution.
\end{theorem}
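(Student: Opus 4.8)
The plan is to assemble the two preceding theorems and close the gap on the multiplier with a short computation that activates the new Assumption \ref{unic} and the inf-sup condition. Existence is immediate: Assumptions \ref{a4}--\ref{unic} contain Assumptions \ref{a4}--\ref{pi}, so Theorem \ref{maini} already furnishes at least one solution of Problem \ref{ap}. Moreover, since Assumptions \ref{a4}--\ref{unic} contain Assumptions \ref{a4}--\ref{sa}, Theorem \ref{ufa} shows that the first component $u$ is uniquely determined. It therefore remains only to prove uniqueness of the Lagrange multiplier $\lambda$.

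To this end I would take two solutions $(u,\lambda_1)$ and $(u,\lambda_2)$ sharing the same (already unique) first component $u$, and test (\ref{pp:1}) along opposite directions. Writing (\ref{pp:1}) for $(u,\lambda_1)$ with $v=u+w$, and for $(u,\lambda_2)$ with $v=u-w$, for an arbitrary $w\in X$, and then adding the two inequalities, the terms $(Au,\cdot)_{X',X}$ and $(f,\pi(\cdot))_{Z',Z}$ cancel by linearity of $\pi$ (Assumption \ref{pi}), while the bilinearity of $b$ (Assumption \ref{a5}) collects the multiplier contributions, leaving
\[
b(w,\lambda_1-\lambda_2)+J(u,w)+J(u,-w)\geq 0\qquad\mbox{for all }w\in X.
\]
Assumption \ref{unic} bounds $J(u,w)+J(u,-w)\le 0$, hence $b(w,\lambda_1-\lambda_2)\geq 0$ for every $w$; replacing $w$ by $-w$ and using linearity of $b$ in its first slot gives the reverse inequality, so in fact $b(w,\lambda_1-\lambda_2)=0$ for all $w\in X$.

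Finally I would invoke the inf-sup condition (\ref{is}) of Assumption \ref{a5} with $\mu=\lambda_1-\lambda_2$: were $\lambda_1\neq\lambda_2$, the supremum of $b(v,\lambda_1-\lambda_2)/(\|v\|_X\|\lambda_1-\lambda_2\|_Y)$ over $v\neq 0_X$ would be at least $\alpha>0$, contradicting the vanishing of $b(\cdot,\lambda_1-\lambda_2)$ just established. Hence $\lambda_1=\lambda_2$, which completes the proof. The only genuinely delicate point is the bookkeeping in the testing step: one must choose the directions $v=u+w$ and $v=u-w$ in the two inequalities precisely so that, after addition, exactly the difference $b(w,\lambda_1-\lambda_2)$ survives and the nonlinear part collapses into the combination $J(u,w)+J(u,-w)$ controlled by Assumption \ref{unic}. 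Everything else is a direct appeal to Theorems \ref{maini} and \ref{ufa} and to the inf-sup bound.
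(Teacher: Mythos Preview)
Your proof is correct and follows essentially the same route as the paper's: test (\ref{pp:1}) with $v=u\pm w$ in the two solutions, add, use Assumption~\ref{unic} to dispose of the $J$ terms, and conclude via the inf-sup condition. The only cosmetic differences are that the paper keeps two a priori distinct first components $u_1,u_2$ until after the addition (so an $\|Au_1-Au_2\|_{X'}$ term appears and is killed by Theorem~\ref{ufa}), and it stops at the one-sided inequality $b(w,\lambda_1-\lambda_2)\le 0$ before invoking (\ref{is}), whereas you symmetrize to $b(w,\lambda_1-\lambda_2)=0$; neither change affects the argument.
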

\begin{proof}
Let  $(u_1,\lambda_1)$ and $(u_2,\lambda_2)$ be two solutions of Problem $\ref{ap}$ and let $w\in X$, $w\ne 0_X$. We write (\ref{pp:1}) with $u=u_1$,
$\lambda=\lambda_1$ and $v=-w+u_1$,  then with $u=u_2$,
$\lambda=\lambda_2$ and $v=w+u_2.$ By adding the resulting inequalities we obtain
\[b(w,\lambda_1-\lambda_2)\leq \|Au_1-A u_2\|_{X'}\|w\|_X+J(u_1,-w)+J(u_2,w). \]
Since Theorem \ref{ufa} guarantees that $u_1=u_2$, using Assumption \ref{unic} we are led to
  \begin{equation}\label{100}
  \frac{b(w,\lambda_1-\lambda_2)}{\|w\|_X}\leq 0.
  \end{equation}
 Moreover, (\ref{is}) implies that
 \begin{equation}\label{101}
 \alpha\|\lambda_1-\lambda_2\|_Y\leq \operatorname{sup}_{w\in X,\,w\neq 0_X} \frac{b(w,\lambda_1-\lambda_2)}{\|w\|_X}.
 \end{equation}
We now combine inequalities (\ref{100}) and (\ref{101}) to deduce that
$\|\lambda_1-\lambda_2\|_Y\leq 0$,
which concludes the proof.
\end{proof}

We proceed with some boundedness results for the solution $(u,\lambda)$ of Problem $\ref{ap}$.

\begin{proposition}\label{pr1} Under Assumptions $\ref{a4}$--$\ref{sa}$ we have
\[\|u\|_X\leq M^{1/(1-q)}(c_0\|f\|_{Z'}+\|A0_X\|_{X'}+c)^{1/(q-1)},\]
$c$ being the positive constant in $(\ref{iyu})$.
\end{proposition}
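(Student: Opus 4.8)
The plan is to test the variational inequality (\ref{pp:1}) at the distinguished point $v=0_X$ and to combine the resulting estimate with the monotonicity-type information encoded in Assumptions \ref{a4} and \ref{a7}. Setting $v=0_X$ in (\ref{pp:1}) and rearranging the duality pairings yields
\[
(Au,u)_{X',X}+b(u,\lambda)\le (f,\pi(u))_{Z',Z}+J(u,-u).
\]
First I would dispose of the multiplier term. Since $0_Y\in\Lambda$ by Assumption \ref{a3bbiss}, taking $\mu=0_Y$ in (\ref{pp:2}) gives $-b(u,\lambda)\le 0$, that is $b(u,\lambda)\ge 0$. Hence this term may be dropped from the left-hand side, leaving the cleaner inequality
\[
(Au,u)_{X',X}\le (f,\pi(u))_{Z',Z}+J(u,-u).
\]

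Next I would estimate the two sides separately. For the right-hand side, Assumption \ref{pi} provides a constant $c_0>0$, namely the operator norm of $\pi$, such that $\|\pi v\|_Z\le c_0\|v\|_X$; hence $(f,\pi(u))_{Z',Z}\le c_0\|f\|_{Z'}\|u\|_X$, while (\ref{iyu}) gives $J(u,-u)\le c\|u\|_X$. For the left-hand side, I would apply Assumption \ref{a4} $(i_2)$ to the pair $(0_X,u)$ and then Assumption \ref{a7} to obtain $(Au-A0_X,u)_{X',X}\ge h(u)\ge M\|u\|_X^q$; combining this with the elementary bound $(A0_X,u)_{X',X}\ge -\|A0_X\|_{X'}\|u\|_X$ produces the lower estimate
\[
(Au,u)_{X',X}\ge M\|u\|_X^q-\|A0_X\|_{X'}\|u\|_X.
\]

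Putting the two estimates together and cancelling one factor of $\|u\|_X$, the case $u=0_X$ being trivial since the asserted bound is nonnegative, leads to
\[
M\|u\|_X^{q-1}\le c_0\|f\|_{Z'}+\|A0_X\|_{X'}+c,
\]
and raising to the power $1/(q-1)$ gives exactly the claimed inequality, after noting that $M^{-1/(q-1)}=M^{1/(1-q)}$. The computation is essentially routine; the only points requiring care are identifying $c_0$ with the operator norm of $\pi$ and getting the sign of $b(u,\lambda)$ right, which is the sole place where the second inequality (\ref{pp:2}) and the hypothesis $0_Y\in\Lambda$ enter. I would also remark that Assumption \ref{sa}, essential for uniqueness, plays no role here: only $M>0$ and $q\ge 2$ from Assumption \ref{a7} are used.
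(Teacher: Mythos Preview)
Your proof is correct and follows essentially the same approach as the paper: test (\ref{pp:1}) at $v=0_X$, eliminate the bilinear term via (\ref{pp:2}) with $\mu=0_Y$, bound $J(u,-u)$ by (\ref{iyu}) and $(f,\pi u)_{Z',Z}$ via the operator norm $c_0$ of $\pi$, and extract the power estimate from Assumption~\ref{a4}~$(i_2)$ together with (\ref{hh}). Your additional observation that Assumption~\ref{sa} is not actually used here is also correct.
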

\begin{proof}
We test with $v=0_X$ in (\ref{pp:1}) to obtain that
\begin{equation}\label{z1}
(Au,u)_{X',X}\leq b(-u,\lambda)+J(u,-u)+(f,\pi u)_{Z',Z}.
\end{equation}
Next, setting $\mu=0_Y$ in (\ref{pp:2}) we get
\begin{equation}\label{z2}
b(-u,\lambda)\leq 0.
\end{equation}
On the other hand, according to (\ref{iyu}) we have
\begin{equation}\label{z3}J(u,-u)\leq c\|u\|_X
\end{equation}
and, since the operator $\pi$ is a linear and continuous operator, there exists $c_0>0$ such that
\begin{equation}\label{c0}
 \|\pi v\|_{Z} \leq c_0\,\|v\|_X\quad \mbox{ for all }\,v\in X.
 \end{equation}
We now combine inequalities (\ref{z1})--(\ref{c0}) and use
Assumption \ref{a4} ($i_2$) and (\ref{hh}) to see that
\begin{equation}\label{nrmu}
M\|u\|_X^{q-1}\leq c_0\|f\|_{Z'}+\|A0_X\|_{X'}+c
\end{equation}
which concludes the proof.
\end{proof}

Next, we introduce the  bounded set
\begin{equation}\label{K1}
K_1=\{v\in X \mid \|v\|_X\leq M_1\},
\end{equation}
where
\begin{equation}\label{M1}
M_1=M^{1/(1-q)}(c_0\|f\|_{Z'}+\|A0_X\|_{X'}+c)^{1/(q-1)}.
\end{equation}
Note that, under the assumptions of Proposition \ref{pr1} we have $u\in K_1$.
Consider now  the following assumption.
\begin{assumption}\label{a10}
For each nonempty bounded subset $S\subset X$,  there exists $L_{S}>0$ such that
\begin{equation}\label{LA}
\|Au-Av\|_{X'}\leq L_S\,\|u-v\|_{X}\quad\mbox{\rm for all }\ \ u, v\in S.
\end{equation}
\end{assumption}

Then, the following result holds.

\begin{proposition} Under Assumptions $\ref{a4}$--$\ref{a10}$,
\begin{equation}\label{majla}
\|\lambda\|_Y\leq \displaystyle\frac{1}{\alpha}\,(c_0\|f\|_{Z'}+L_{K_1}\,M_1+\|A0_X\|_{X'}+c),
\end{equation}
$c_0$ being the positive constant in $(\ref{c0})$, $L_{K_1}$  the positive constant in $(\ref{LA})$ corresponding to the set $S=K_1$,
 and $c$ the positive constant in $(\ref{iyu})$.
\end{proposition}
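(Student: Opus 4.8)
The plan is to exploit the inf-sup condition in Assumption~\ref{a5} $(i_3)$, which, applied to the particular multiplier $\mu=\lambda$, gives at once
\[\alpha\,\|\lambda\|_Y\leq \sup_{w\in X,\,w\neq 0_X}\frac{b(w,\lambda)}{\|w\|_X}.\]
Consequently the whole estimate reduces to producing a uniform bound of the form $b(w,\lambda)\leq C\,\|w\|_X$ for all $w\in X$, with $C$ equal to the quantity inside the parentheses in (\ref{majla}). To reach such a bound I would go back to the variational inequality (\ref{pp:1}).

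First I would test in (\ref{pp:1}) with $v=u-w$, where $w\in X$ is arbitrary, so that $v-u=-w$. Using the bilinearity of $b$ and the linearity of $\pi$, and rearranging, this produces
\[b(w,\lambda)\leq (f,\pi w)_{Z',Z}-(Au,w)_{X',X}+J(u,-w).\]
It then remains to estimate the three terms on the right. For the first term I would use the continuity of $\pi$ through (\ref{c0}), which yields $(f,\pi w)_{Z',Z}\leq c_0\,\|f\|_{Z'}\,\|w\|_X$; for the last term I would use (\ref{iyu}), which gives $J(u,-w)\leq c\,\|w\|_X$.

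The decisive step, and the one where Assumption~\ref{a10} enters, is the bound $-(Au,w)_{X',X}\leq \|Au\|_{X'}\,\|w\|_X$ together with a control of $\|Au\|_{X'}$. Here I would write $\|Au\|_{X'}\leq \|Au-A0_X\|_{X'}+\|A0_X\|_{X'}$ and apply the local Lipschitz property (\ref{LA}) on the bounded set $K_1$. This is legitimate because $M_1>0$ forces $0_X\in K_1$, while Proposition~\ref{pr1} guarantees that the first component $u$ of the solution belongs to $K_1$; hence (\ref{LA}) with $S=K_1$ gives $\|Au-A0_X\|_{X'}\leq L_{K_1}\,\|u\|_X\leq L_{K_1}\,M_1$. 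I expect this to be the main obstacle, in the sense that it is precisely the a priori boundedness of $u$ established in Proposition~\ref{pr1} that makes the merely local Lipschitz hypothesis usable; without that boundedness no single constant $L_{K_1}$ would be available.

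Combining the three estimates yields $b(w,\lambda)\leq (c_0\,\|f\|_{Z'}+L_{K_1}\,M_1+\|A0_X\|_{X'}+c)\,\|w\|_X$ for every $w\in X$. Dividing by $\|w\|_X$, passing to the supremum over $w\neq 0_X$, and invoking the inf-sup inequality displayed above then gives (\ref{majla}), which completes the argument.
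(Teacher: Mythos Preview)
Your proof is correct and follows essentially the same route as the paper: test (\ref{pp:1}) with $v=u-w$, estimate the three resulting terms using (\ref{c0}), (\ref{iyu}) and the duality bound on $(Au,w)_{X',X}$, control $\|Au\|_{X'}$ via $\|Au-A0_X\|_{X'}+\|A0_X\|_{X'}\leq L_{K_1}M_1+\|A0_X\|_{X'}$ using Proposition~\ref{pr1}, and conclude with the inf-sup condition. Your explicit observation that $M_1>0$ ensures $0_X\in K_1$, so that (\ref{LA}) applies to the pair $(u,0_X)$, is a detail the paper leaves implicit.
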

\begin{proof}
Let $w\in X$ be arbitrarily fixed. Setting $v=u-w$ in (\ref{pp:1})
 we have
 \begin{eqnarray*}
 b(w,\lambda)&\leq& (Au,-w)_{X',X}+J(u,-w)+(f,\pi w)_{Z',Z}\\
 &\leq& \|Au\|_{X'}\|w\|_X+c\|w\|_X+c_0\|f\|_{Z'}\|w\|_X.
 \end{eqnarray*}
 Now, we use the inf-sup property of the form $b$, see Assumption $\ref{a5}$ $(i_3)$, to write
\begin{equation}\label{nr}
 \alpha\|\lambda\|_Y\leq c_0\|f\|_{Z'}+\|Au\|_{X'}+c.
 \end{equation}
On the other hand, since $u\in K_1,$ it follows that
\begin{equation}\label{zr}
\|Au\|_{X'}\leq L_{K_1}M_1+\|A0_X\|_{X'}.
\end{equation}
Combining now (\ref{nr}) and (\ref{zr}) we obtain
\[ \alpha\|\lambda\|_Y\leq L_{K_1} \,M_1+\|A0_X\|_{X'}+c+c_0\|f\|_{Z'}\]
which implies (\ref{majla}) and concludes the proof.
\end{proof}

\section{Convergence and optimization}\label{cr}

In the first part of this section we  study the dependence of the solution of Problem \ref{ap} with respect to the data and prove a convergence result. This  convergence result will be applied in the second part of this section in order to study an associated optimization problem.
We suppose in what follows that Assumptions \ref{a4}--\ref{a10} hold and we denote by $(u,\lambda)$ the unique solution of Problem \ref{ap} guaranteed by Theorem \ref{t1}. Moreover,
for each $n\in\mathbb{N}$ we consider  the following problem.

\begin{problem}\label{ppn} Find $(u_n,\lambda_n)\in X\times\Lambda_n$
	such that
	\begin{eqnarray}
	\label{1n}(A_n u_n,v-u_n)_{X',X}+b_n(v-u_n,\lambda_n)+J_n(u_n,v-u_n)&\geq& (f_n,\pi (v-u_n))_{Z',Z}
\\[2mm]
	b_n(u_n,\mu-\lambda_n)&\leq& 0.
\label{2n}
	\end{eqnarray}
for all $v\in X,$ $\mu\in \Lambda_n$.
\end{problem}
Herein, for each $n\in\mathbb{N}$, the operator $A_n$, the form $b_n$, the function $J_n,$ the element $f_n$ and the set $\Lambda_n$ represent a perturbation of $A$, $b$, $J$, $f$, $\Lambda$ and are supposed to
satisfy  Assumptions \ref{a4}--\ref{a3bbiss}, \ref{a7}--\ref{a10} with function $h_n$ and constants $\alpha_n,$ $c_n,$ $M_n,$ $q_n,$ $m_n$, $L_S^n$.  To avoid any confusion, when used with $n$, we refer to these assumptions as Assumptions $\ref{a4}_n$--$\ref{a3bbiss}_n$, $\ref{a7}_n$--$\ref{a10}_n$. Recall that Theorem \ref{t1} guarantees the uniqueness of the solution of Problem \ref{ppn}, denoted by $(u_n,\lambda_n)$.
To proceed, we consider the following additional assumptions.
\begin{assumption}\label{cvA}
There exists $\widetilde{\delta}$ and, for each $n\in\mathbb{N}$, there exist $F_n\ge 0$ and  $\delta_n\geq 0$  such that:

$(i_1)$\ $\|A_{n}v-Av\|_{X'}\le F_n(\|v\|_X+\delta_n)\quad\mbox{\rm for all}\ v\in X$;

$(i_2)$\ $\displaystyle\lim\, F_n=0$;

$(i_3)$\ $\delta_n\leq \widetilde{\delta}\ \ \mbox{\rm for all }n\in \mathbb N$.
\end{assumption}
\begin{assumption}\label{JJJJ}
 For each $u\in X,$ the application $X \ni v\to J(u,v)\in \mathbb R$ is weakly lower semicontinuous. In addition,
 for each sequence $\{u_n\}$ such that $u_n\rightharpoonup \widetilde{u}$ in $X$, the following inequalities hold:
 \begin{eqnarray}
 &&\limsup\big[ J(\widetilde{u},u_n-\widetilde{u})-J_n(\widetilde{u},u_n-\widetilde{u})\big]\leq 0;\label{nr1}\\ [2mm]
 &&\limsup \big[J_n(u_n,v-u_n)-J(u_n,v-u_n)\big]\leq 0\quad\mbox{\rm  for all }\ v\in X.
 \end{eqnarray}
 \end{assumption}
\begin{assumption}\label{bA}
There exists $M_0>0$ such that $M_n-m_n\geq M_0$\  for all $n\in\mathbb{N}$.
\end{assumption}
\begin{assumption}\label{qn}
There exists $\widetilde{q}>0$ such that $q_n\geq \widetilde{q}$\  for all $n\in\mathbb{N}$.
\end{assumption}
\begin{assumption}\label{bA2}
There exists $\widetilde{c}$ such that $c_n\leq \widetilde{c}$\ \,for all $n\in\mathbb{N}$.
\end{assumption}
\begin{assumption}\label{cvb}
For all sequences\ $\{z_n\}\subset X$,\ $\{\mu_n\}\subset Y$ \  \mbox{such that}
$z_n\rightharpoonup z\ \mbox{in}\ X$,\ $\mu_n\rightharpoonup \mu\ \mbox{in}\ Y$,\ \mbox{we have}
$\limsup\,b_n(w-z_n,\mu_n)\leq b(w-z,\mu)$  for all  $w\in X.$
\end{assumption}
\begin{assumption}\label{bb}
There exists $\alpha_0>0$ such that $\alpha_n\ge \alpha_0$ for all $n\in\mathbb{N}.$
\end{assumption}
\begin{assumption}\label{cvLa}
$ \{\Lambda_n\}$\ \mbox{\rm converge to}\ $\Lambda$\ \mbox{\rm in the sense of Mosco, i.e.,}\\ [3mm]
$(i_1)$\  for each $\mu\in \Lambda$ there exists a sequence $\{\mu_n\}$ such that
$\mu_n\in \Lambda_n$ for each $n\in \mathbb{N}$  and	$\mu_n\to \mu$ \mbox{in} $Y$;
\\[2mm]
$(i_2)$  for each sequence $\{\mu_n\}$ such that
\ $\mu_n\in\Lambda_n$ for each $n\in\mathbb{N}$  and $\mu_n\rightharpoonup \mu$ in $Y$, we have\ $\mu\in\Lambda$.
\end{assumption}
\begin{assumption}\label{cvf}
\ \ $ f_{n}\rightharpoonup f\quad {\rm in}\quad Z'$.
\end{assumption}
\begin{assumption}\label{cvpi}
	The operator $\pi$ is completely continuous, i.e., for each sequence\ $\{v_n\}\subset X$\  \mbox{such that}
	$v_n\rightharpoonup v\ \mbox{in}\ X$,\  \mbox{we have}
	\quad $$\pi v_n\to\pi v\quad{\rm in}\quad Y.
	$$
\end{assumption}

Our first result in this section states the convergence of the solution to Problem \ref{ppn} to the solution of Problem \ref{ap} and it is stated as follows.

\begin{theorem}\label{t2} Under Assumptions $\ref{a4}$--$\ref{a10}$, $\ref{a4}_n$--$\ref{a3bbiss}_n$, $\ref{a7}_n$--$\ref{a10}_n$, $\ref{cvA}$--$\ref{cvpi}$
the following convergences hold:
\begin{eqnarray}
&&\label{cvu}u_n\to u\qquad{\rm in}\quad X,\\
&&\label{cvla}\lambda_n\rightharpoonup \lambda\qquad{\rm in}\quad Y.
\end{eqnarray}
\end{theorem}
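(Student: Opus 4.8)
The plan is to establish the convergence by a compactness-plus-uniqueness argument, the standard scheme for such continuous dependence results. First I would show that the sequence $\{u_n\}$ is bounded in $X$. This follows from the boundedness estimate in Proposition \ref{pr1} applied to Problem \ref{ppn}: the bound reads $\|u_n\|_X\le M_n^{1/(1-q_n)}(c_0\|f_n\|_{Z'}+\|A_n0_X\|_{X'}+c_n)^{1/(q_n-1)}$, and one checks it stays uniformly bounded using Assumptions $\ref{cvA}$--$\ref{bA2}$ together with the fact that $\{f_n\}$ is weakly convergent (hence bounded by uniform boundedness) and $A_n0_X\to A0_X$. A parallel argument bounds $\{\lambda_n\}$ in $Y$ via the estimate $(\ref{majla})_n$, using the uniform inf-sup constant $\alpha_0$ from Assumption $\ref{bb}$ and the uniform Lipschitz control on the relevant bounded set.

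By reflexivity of $X$ and $Y$, I can then extract a subsequence along which $u_n\rightharpoonup\widetilde u$ in $X$ and $\lambda_n\rightharpoonup\widetilde\lambda$ in $Y$. The next step is to identify the weak limit $(\widetilde u,\widetilde\lambda)$ as a solution of Problem \ref{ap}. For the multiplier, Assumption $\ref{cvLa}(i_2)$ guarantees $\widetilde\lambda\in\Lambda$. To pass to the limit in the inequality $(\ref{2n})$ yielding $(\ref{pp:2})$, I would fix $\mu\in\Lambda$, use the Mosco condition $(i_1)$ to produce $\mu_n\in\Lambda_n$ with $\mu_n\to\mu$, insert $\mu_n$ in $(\ref{2n})_n$, and take the upper limit invoking Assumption $\ref{cvb}$. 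To recover $(\ref{pp:1})$, I would fix $v\in X$, take $v$ as test function in $(\ref{1n})_n$, and pass to the limit term by term: the bilinear form and the $J$-terms are handled by Assumptions $\ref{cvb}$, $\ref{J}(i_3)$ and $\ref{JJJJ}$, the right-hand side by the weak convergence $f_n\rightharpoonup f$ combined with the complete continuity of $\pi$ (Assumption $\ref{cvpi}$), which gives $\pi u_n\to\pi\widetilde u$ strongly.

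The main obstacle, as always in generalized-monotone arguments, is the operator term $(A_nu_n,v-u_n)_{X',X}$, since the nonlinearity prevents a direct passage to the weak limit. The standard device is to split $A_nu_n$ as $(A_nu_n-Au_n)+Au_n$, control the first difference by Assumption $\ref{cvA}(i_1)$ on the bounded sequence (so its contribution vanishes using $F_n\to 0$), and then handle $(Au_n,v-u_n)$ by a lim-inf/lim-sup monotonicity trick. Concretely I expect to first prove the key convergence $u_n\to\widetilde u$ \emph{strongly}: using Assumption $\ref{a4}(i_2)$ and $(\ref{hh})$ one has $M\|u_n-\widetilde u\|_X^q\le h(u_n-\widetilde u)\le (Au_n-A\widetilde u,u_n-\widetilde u)_{X',X}$, and the right-hand side is shown to tend to zero by testing $(\ref{1n})_n$ with $v=\widetilde u$, rearranging, and using the limit relations $(\ref{nr1})$ together with the weak lower semicontinuity built into Assumption $\ref{a4}(i_3)$ and $\ref{JJJJ}$. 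This strong convergence then upgrades all the weak-limit passages and lets the continuity of $A$ (Assumption $\ref{a4bis}$) close the argument, showing $(\widetilde u,\widetilde\lambda)$ solves Problem \ref{ap}.

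Finally I would invoke the uniqueness guaranteed by Theorem \ref{t1}, so that $(\widetilde u,\widetilde\lambda)=(u,\lambda)$. Since every subsequence admits a further subsequence converging to the same limit $(u,\lambda)$, a standard subsequence argument yields the convergences $(\ref{cvu})$ and $(\ref{cvla})$ for the whole sequence, with $(\ref{cvu})$ being the strong convergence already established. This completes the proof.
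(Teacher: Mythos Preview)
Your proposal is correct and follows essentially the same five-step scheme as the paper: boundedness of $\{u_n\}$ and $\{\lambda_n\}$, extraction of a weakly convergent subsequence, upgrading $u_n\rightharpoonup\widetilde u$ to strong convergence by testing (\ref{1n}) with $v=\widetilde u$, identification of $(\widetilde u,\widetilde\lambda)$ as a solution, and the uniqueness/subsequence conclusion.

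The one small difference is in the strong convergence step. You base the coercivity on the \emph{limit} operator, writing $M\|u_n-\widetilde u\|_X^q\le (Au_n-A\widetilde u,u_n-\widetilde u)_{X',X}$ and splitting off $A_nu_n-Au_n$ via Assumption~\ref{cvA}; the paper instead uses the perturbed operator $A_n$ together with Assumptions~$\ref{a7}_n$, $\ref{a7bis}_n$ and \ref{bA} to get $M_0\|u_n-\widetilde u\|_X^{\widetilde q}$ on the left, thereby absorbing $J_n(u_n,\widetilde u-u_n)+J_n(\widetilde u,u_n-\widetilde u)$ and leaving only $-J_n(\widetilde u,u_n-\widetilde u)$ on the right, which is where (\ref{nr1}) is actually invoked. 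Your route leaves $J_n(u_n,\widetilde u-u_n)$ (first slot $u_n$) on the right, and for that you need the \emph{second} inequality in Assumption~\ref{JJJJ} combined with Assumption~\ref{J}\,$(i_3)$, not (\ref{nr1}); with that correction your variant goes through as well.
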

\begin{proof} The proof is carried out in several steps that we describe below.
\begin{step}\label{l1} We prove that the sequence $\{u_n\}$ is bounded in $X.$
\end{step}
Let $n\in\mathbb{N}$.
Using arguments similar to those used to obtain (\ref{nrmu})  we can write
\begin{equation}\label{mu}
M_n\|u_n\|_X^{q_n-1}\leq (c_0\|f_n\|_{Z'}+\|A_n0_X\|_{X'}+c_n).
\end{equation}
On the other hand, Assumptions \ref{cvA}, \ref{cvf} imply that
\begin{equation}\label{11}
\|A_n0_X\|_{X'}\le F_n\delta_n+\|A0_X\|_{X'},
\end{equation}
and there exist $\widetilde{F}>0$ and $\widetilde{f}>0$
such that
\begin{equation}\label{mfd}
|F_n|\leq \widetilde{F}\quad \mbox{and}\quad \|f_n\|_{Z'}\leq \widetilde{f}\quad \mbox{for all }n\in \mathbb N.
\end{equation}
Therefore, combining the inequalities (\ref{mu})--(\ref{mfd}) and keeping in mind Assumption \ref{bA2} we find that
\begin{equation*}
M_0\| u_n\|_X^{q_n-1}\leq  c_0\widetilde{f}+\widetilde{F}\widetilde{\delta}+\|A0_X\|_X+\widetilde{c}.
\end{equation*}
Next, using Assumption \ref{qn} we deduce that
\begin{equation}\label{bu}
\| u_n\|_X\leq k,
\end{equation}
where
\begin{equation}\label{defk}
k= M_0^{1/(1-\widetilde{q})}\big(c_0\widetilde{f}+\widetilde{F}\widetilde{\delta}+\|A0_X\|_X+\widetilde{c}\big)^{1/(\widetilde{q}-1)},
\end{equation}
which ends the proof of this step.


\begin{step}
We prove that the sequence $\{\lambda_n\}$ is bounded in $Y$.
\end{step}

First, we remark that $(\ref{bu})$ implies that $\{u_n\}\subset K$ where
\begin{equation}\label{defK}
K=\{\,v\in X\,|\, \|v\|\leq k\,\}.
\end{equation}
Let $n\in\mathbb{N}$. By using the inf-sup property of the form $b$ and arguments similar to those used in the proof of (\ref{nr}), we can write
\begin{equation}\label{13}
\alpha_n\|\lambda_n\|_Y\le  c_0\| f_n\|_{Z'}+\|A_n u_n\|_{X'}+c_n.
\end{equation}
On the other hand,
\begin{eqnarray*}
&&\|A_n u_n\|_{X'}\le \|A_nu_n-Au_n\|_{X'}+\|
Au_n\|_X\\ [2mm]
&&\quad\leq
F_n(\|u_n\|_X+\delta_n)+\|Au_n-A0_X\|_{X'}+\|A0_X\|_{X'}\\ [2mm]
&&\qquad\leq
\widetilde{F}(k+\widetilde{\delta})+L_{K} k+\|A0_X\|_{X'},
\end{eqnarray*}
where $K$ and $k$ are given by (\ref{defK}) and (\ref{defk}), respectively.

Therefore,
\begin{equation}\label{mla}
\alpha_0\|\lambda_n\|_Y\leq \,(c_0\widetilde{f}+\widetilde{F}(k+\widetilde{\delta})+L_{K}\,k+\|A0_X\|_{X'}+\widetilde{c}),
\end{equation}
which ends the proof of this step.

\begin{step}\label{sau}We prove that there exists  a pair  $(\widetilde{u},\widetilde{\lambda})\in X\times Y$ such that, passing to a subsequence still denoted $\{(u_n,\lambda_n)\}$,
we have $u_n \to {\widetilde{u}}$ in $X$ and
$\lambda_n \rightharpoonup {\widetilde{\lambda}}$  in $Y$.
\end{step}
The existence of an element $(\widetilde{u},\widetilde{\lambda})\in X\times Y$
as well as the weak convergences
$u_n \rightharpoonup {\widetilde{u}}$ in $X$ and
$\lambda_n \rightharpoonup {\widetilde{\lambda}}$  in $Y$ follows from Steps 1 and 2 combined with a standard
reflexivity argument. In order to prove the strong convergence  $u_n \to {\widetilde{u}}$ in $X$
we start by testing in (\ref{1n}) with $v=\widetilde{u}$. We have
\[(A_n u_n,\widetilde{u}-u_n)_{X',X}+b_n(\widetilde{u}-u_n,\lambda_n)+J_n(u_n,\widetilde{u}-u_n)\geq (f_n,\pi(\widetilde{u}-u_n))_{Z',Z}\]
and, therefore,
\begin{eqnarray*}
&&(A_n u_n-A_n\widetilde{u},u_n-\widetilde{u})_{X',X}-J_n(u_n, \widetilde{u}-u_n)-J_n(\widetilde{u},u_n-\widetilde{u})\leq\\
&&\quad(A_n\widetilde{u},\widetilde{u}-u_n)_{X',X}+b_n(\widetilde{u}-u_n,\lambda_n)+(f_n,\pi u_n-\pi\widetilde{u})_{Z',Z}\nonumber
-J_n(\widetilde{u},u_n-\widetilde{u}).\nonumber
\end{eqnarray*}

Moreover, using Assumptions $\ref{a4}_n$, $\ref{a7}_n$ $\ref{a7bis}_n$ and the inequalities  $M_n-m_n\geq M_0$ and $q_n\geq \widetilde{q},$ guaranteed by Assumptions \ref{bA} and \ref{qn}, respectively, we find that
\begin{eqnarray}\label{eutill}
M_0\|u_n-\widetilde{u}\|_X^{\widetilde{q}}\leq(A_n\widetilde{u},\widetilde{u}-u_n)_{X',X}+b_n(\widetilde{u}-u_n,\lambda_n)+(f_n,\pi u_n-\pi\widetilde{u})_{Z',Z}
-J_n(\widetilde{u},u_n-\widetilde{u}).
\end{eqnarray}

Next, we use Assumption \ref{cvA} ($i_1$) and write
\begin{eqnarray*}
	&&(A_n\widetilde{u},\widetilde{u}-u_n)_{X',X}=
	(A_n\widetilde{u}-A\widetilde{u},\widetilde{u}-u_n)_{X',X}+
	(A\widetilde{u},\widetilde{u}-u_n)_{X',X}\\ [2mm]
	&&\qquad\qquad \leq F_n(\|\widetilde{u}\|_X+\delta_n)\|u_n-\widetilde{u}\|_X+
	(A\widetilde{u},\widetilde{u}-u_n)_{X',X},
\end{eqnarray*}
then we pass to the upper limit in this inequality, and use Assumption \ref{cvA} $(i_2)$, $(i_3)$ to deduce that
\begin{equation}\label{17}
\limsup\, (A_n\widetilde{u},\widetilde{u}-u_n)_{X',X}\leq 0.
\end{equation}
Notice also that Assumptions \ref{cvf}, \ref{cvb} together with the convergences $u_n \rightharpoonup {\widetilde{u}}$ in $X$ and
$\lambda_n \rightharpoonup {\widetilde{\lambda}}$  in $Y$
imply that
\begin{eqnarray}
&&\label{16}
(f_n,\pi u_n-\pi\widetilde{u})_{Z',Z}\to 0,\\ [2mm]
&&\label{18}
\limsup\, b_n(\widetilde{u}-u_n,\lambda_n)\le 0.
\end{eqnarray}
Next, we use Assumption \ref{JJJJ} to write
\begin{eqnarray*}
	&& \limsup\, \big[-J_n(\widetilde{u},u_n-\widetilde{u})
	\big]\\ [2mm]
	&&=\limsup\,\big[J(\widetilde{u},u_n-\widetilde{u})-J_n(\widetilde{u},u_n-\widetilde{u})-J(\widetilde{u},u_n-\widetilde{u})\big]\\  [2mm]
	&&\leq\limsup\,\big[J(\widetilde{u},u_n-\widetilde{u})-J_n(\widetilde{u},u_n-\widetilde{u})]+\limsup[-J(\widetilde{u},u_n-\widetilde{u})\big]
	\\  [2mm]
	&&\leq -\liminf J(\widetilde{u},u_n-\widetilde{u})\leq -J(\widetilde{u},0_X).
\end{eqnarray*}
Using now Assumption \ref{J} $(i_2)$ we have $J(\widetilde{u},0_X)=0$ and, therefore,
\begin{equation} \limsup\, \Big[-J_n(\widetilde{u},u_n-\widetilde{u})\Big]\leq 0.
\end{equation}
We now pass to the upper limit in the inequality (\ref{eutill}) and we obtain
\[\limsup\,M_0\|u_n-\widetilde{u}\|^{\widetilde{q}}_X\leq 0.\]
This implies that $u_n\to\widetilde{u}$ in $X$ and concludes the proof of this step.

\begin{step} We prove that the pair  $(\widetilde{u},\widetilde{\lambda})\in X\times Y$ is a solution of Problem $\ref{ap}$.
\end{step}
First, we  recall that for each $n\in\mathbb{N}$ we have $\lambda_n\in\Lambda_n$. Keeping in mind Assumption \ref{cvLa}\,$(i_2)$ we deduce that
\begin{equation}
\label{19}\widetilde{\lambda}\in\Lambda.
\end{equation}
On the other hand, recall that $\{u_n\}\subset K$ where $K$
is the  closed  subset of $X$ defined by (\ref{defK}). Therefore, since $u_n\to\widetilde{u}$ in $X$, we deduce that  $\widetilde{u}\in K.$

Let $n\in\mathbb{N}$ and $v\in X.$ We use Assumption \ref{cvA} to see that
\begin{eqnarray*}
	&&\|A_nu_n-A\widetilde{u}\|_{X',X}\le \|A_n u_n-Au_n\|_{X'}+\|Au_n-A\widetilde{u}\|_{X'}\\ [2mm]
	&&\quad\le
	F_n(\|u_n\|_X+\delta_n)+ L_K\,\|u_n-\widetilde{u}\|_X
\end{eqnarray*}
and, therefore,
\begin{eqnarray}\label{w1}
A_nu_n\to A\widetilde{u}\quad{\rm in}\quad X'.
\end{eqnarray}

By using Assumption \ref{cvb} with
$z_n=0_X$, $\mu_n=\lambda_n$, $w=v$, and then with $z_n=v$, $\mu_n=\lambda_n$ and $w=0_X$ we obtain
\[\limsup\,b_n(v,\lambda_n)\le b(v,\widetilde{\lambda})
\quad{\rm and}\quad
b(v,\widetilde{\lambda})\le\liminf\,b_n(v,\lambda_n), \]
respectively. These  inequalities show that
\begin{equation*}
b_n(v,\lambda_n)\to b(v,\widetilde{\lambda}).
\end{equation*}
Similarly, taking $w=0_X,$ $z_n=u_n$ and $\mu_n=\lambda_n,$  by Assumption \ref{cvb} we obtain,
\[\limsup\,b_n(-u_n,\lambda_n)\leq b(-\widetilde{u},\widetilde{\lambda}).\]
Consequently,
\begin{equation}\label{21}
\limsup\,b_n(v-u_n,\lambda_n)\leq b(v-\widetilde{u},\widetilde{\lambda}).
\end{equation}

Moreover, note that Assumptions \ref{cvf} and the convergence $u_n\rightharpoonup\widetilde{u}$ in $X$ imply that
\begin{equation}
\label{22}(f_n,\pi (v-u_n))_{Z',Z}\to (f,\pi (v-\widetilde{u}))_{Z',Z}.
\end{equation}
Next we write
\begin{eqnarray*}
&&\limsup J_n(u_n,v-u_n)\leq \limsup \big[J_n(u_n,v-u_n)-J(u_n,v-u_n)\big]\\
&&\qquad+\limsup J(u_n,v-u_n),
\end{eqnarray*}
and, therefore,  Assumptions \ref{JJJJ} and \ref{J}
yield
\begin{equation}\label{JJUTY}\limsup J_n(u_n,v-u_n)\leq J(\widetilde{u},v-\widetilde{u})\quad\mbox{ for all }v\in X.
\end{equation}
Keeping in mind (\ref{w1})-(\ref{JJUTY}), we pass to the upper limit in the inequality (\ref{1n}) to see that
\begin{equation}\label{23}
(A\widetilde{u},v-\widetilde{u})_{X',X}+b(v-\widetilde{u},\widetilde{\lambda})+J(\widetilde{u},v-\widetilde{u})\geq (f,\pi (v-\widetilde{u}))_{Z',Z}.
\end{equation}

Consider now an arbitrary element $\mu\in\Lambda$. Using Assumption \ref{cvLa} we know that there exists a sequence
$\{\xi_n\}$ such that
$\xi_n\in \Lambda_n$ for each $n\in \mathbb{N}$  and $\xi_n\to \mu$ in $Y$. This allows to use the inequality (\ref{2n}) to see that
\[b_n(u_n,\xi_n-\lambda_n)\le 0,\]
which implies that
\begin{equation}\label{24}
\liminf\,b_n(u_n,\xi_n-\lambda_n)\le 0.
\end{equation}
On the other hand, by Assumption \ref{cvb} with $w=0_X,z_n=u_n$ and $\mu_n=\xi_n-\lambda_n$ we deduce that
\begin{equation*}
\limsup\,b_n(-u_n,\xi_n-\lambda_n)\le b(-\widetilde{u},\mu-\widetilde{\lambda})
\end{equation*}or, equivalently,
\begin{equation}\label{25}
b(\widetilde{u},\mu-\widetilde{\lambda})\le \liminf\,b_n(u_n,\xi_n-\lambda_n).
\end{equation}
We  combine the inequalities (\ref{24}) and (\ref{25}) to find that
\begin{equation}\label{26}
b(\widetilde{u},\mu-\widetilde{\lambda})\le 0.
\end{equation}

Finally, we gather  (\ref{19}), (\ref{23}) and (\ref{26}) to  conclude the proof of this step.
\hfill$\Box$


\begin{step} We now prove the convergences $(\ref{cvu})$ and $(\ref{cvla})$.
\end{step}

Recall that Theorem \ref{t1} states the existence of a unique solution to Problem \ref{ap}, denoted $(u,\lambda)$. Therefore, it follows from Step 4 that ${\widetilde{u}} = u$ and $\widetilde{\lambda}=\lambda$.
A careful examination of the  steps 1--4 reveals the fact that the sequence $\{(u_n,\lambda_n)\}$ is bounded in $X\times Y$
and every  subsequence of $\{(u_n,\lambda_n)\}$
which converges weakly in $X\times Y$ has the same limit $(u,\lambda)$. Therefore, by a standard argument we deduce  that the whole sequence $\{(u_n,\lambda_n\} \}$ converges weakly in $X\times Y$ to $(u,\lambda)$ or, equivalently,  $u_n \rightharpoonup{u}$ in $X$ and
$\lambda_n \rightharpoonup \lambda$  in $Y$.
This implies that  $(\ref{cvla})$ holds. Moreover, by repeating the arguments in Step 3 one shows that the strong convergence  $(\ref{cvu})$ holds, which
concludes the proof of the theorem.
\end{proof}

To proceed, we pay attention to the optimization of the solution to the mixed variational problem (\ref{pp:1})--(\ref{pp:2}). To this end, we consider a reflexive Banach space $W$  endowed with the norm $\|\cdot\|_W$ and a nonempty subset $U\subset W$.
For each $p\in U$ we consider an operator $A_p$, a form $b_p$,  a function $J_p$, a set $\Lambda_p$ and an element $f_p$  which satisfy Assumptions
$\ref{a4}$--$\ref{a3bbiss}$, $\ref{a7}$--$\ref{a10}$
with function $h_p$  and constants   $\alpha_p$, $c_p$, $M_p$, $m_p$, $q_p,$ $L_S^p$. To avoid any confusion, when used with $p$, we refer to these  assumptions as Assumptions $\ref{a4}_p$--$\ref{a3bbiss}_p$, $\ref{a7}_p$--$\ref{a10}_p$.
Then, if Assumption \ref{pi} is satisfied, we deduce from Theorem \ref{t1} that for each $p\in U$ there exists a unique solution $(u_p,\lambda_p)$ for the following problem.

\begin{problem}\label{ppp} Find $(u_p,\lambda_p)\in X\times\Lambda_p$
such that
	\begin{eqnarray}
	(A_pu_p,v-u_p)_{X',X}+b_p(v-u_p,\lambda_p)+J_p(u_p,v-u_p)&\geq& (f_p,\pi (v-u_p))_{Z',Z}\label{1p},\\
	b_p(u_p,\mu-\lambda_p)&\leq&0\label{2p}
	\end{eqnarray}
for all $v\in X,$ $\mu\in \Lambda_p$.	
\end{problem}
Consider also a cost functional
$J:U\to\mathbb{R}$ defined by
\begin{equation}\label{Jmn}
J(p)=	{\cal L}(u_p,\lambda_p,p)\qquad\mbox{\rm for all}\ \,p\in U
\end{equation}
where  ${\cal L}:X\times Y\times U \to\mathbb{R}$ is a given function which will be described below.
Then, the optimization problem we are interested in is the following.
\begin{problem}\label{o}  Find $p^*\in U$ such that
	\begin{equation}\label{o1p}
	J(p^*)=\min_{p\in U} J(p).
	\end{equation}
	\end{problem}

To solve Problem \ref{o} we consider the following assumptions.
\begin{assumption}\label{o2}
$ \qquad U\  \ \mbox{is a nonempty weakly closed subset of}\ \  W.$
\end{assumption}
\begin{assumption}\label{o3}
For all sequences $\{u_n\}\subset X,$ $\{\lambda_n\}\subset Y$  and
$\{p_n\}\subset U$ such that $u_n\rightarrow u$ in $X,$ $\lambda_{n}\rightharpoonup
\lambda$ in $Y,$ $p_{n}\rightharpoonup p$ in $W$, we have
$\displaystyle\liminf_{n\to
	\infty}\,{\cal L}(u_n,\lambda_n,p_n)\ge {\cal L}(u,\lambda,p).$
\end{assumption}
\begin{assumption}\label{o4} There exists $\psi: U\to\mathbb R$ such that:\medskip

$(i_1)$\ ${\cal L}(u,\lambda,p)\geq \psi(p)\quad \mbox{ for all }\,u\in X,\  \lambda\in Y,\ p\in U$;
\medskip

$(i_2)$\ $\|p_{n}\|_{W}\to+\infty\ \Longrightarrow\ \psi(p_n)\to \infty.$
\end{assumption}

\begin{assumption}\label{o5} $U$ is a bounded subset of $W$.
\end{assumption}

Our main result in this section is the following.

\begin{theorem}\label{t4}  Keep Assumptions
	$\ref{a4}_p$--$\ref{a3bbiss}_p$, $\ref{a7}_p$--$\ref{a10}_p$ for any $p\in U$, together with Assumptions $\ref{pi}$ , $\ref{cvpi}$, $\ref{o2},$ $\ref{o3}$  and either
	$\ref{o4}$ or $\ref{o5}$. In addition, assume that
	for any sequence $\{p_n\}\subset U$  such that $p_{n}\rightharpoonup p$ in $W,$
	Assumptions $\ref{cvA}$--$\ref{cvf}$ are satisfied
	with $A_n=A_{p_n}$, $A=A_p$,  $b_n=b_{p_n}$, $J_n=J_{p_n}$, $J=J_p$,   $\Lambda_n=\Lambda_{p_n}$, $\Lambda=\Lambda_p$, $f_n=f_{p_n}$, $f=f_p$.
	Then Problem $\ref{o}$ has  at least one solution.
\end{theorem}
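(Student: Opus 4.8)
The plan is to use the direct method of the calculus of variations. Write $d=\inf_{p\in U}J(p)$ for the value of Problem \ref{o}. Since $U$ is nonempty by Assumption \ref{o2}, choosing any $p_0\in U$ gives $d\le J(p_0)<+\infty$, so $d<+\infty$ and there exists a minimizing sequence $\{p_n\}\subset U$ with $J(p_n)\to d$. The whole point is to extract a weak limit $p^*$ of this sequence lying in $U$, to show that the solutions associated with the $p_n$ converge appropriately, and then to pass to the lower limit in the cost functional.

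First I would establish that $\{p_n\}$ is bounded in $W$. If Assumption \ref{o5} holds this is immediate since $U$ is bounded. If instead Assumption \ref{o4} holds, I argue by contradiction: were $\{p_n\}$ unbounded, along a subsequence $\|p_n\|_W\to+\infty$, whence $\psi(p_n)\to\infty$ by $(i_2)$ and $J(p_n)\ge\psi(p_n)\to\infty$ by $(i_1)$, contradicting the fact that $J(p_n)\to d<+\infty$ is bounded above. Thus in either case $\{p_n\}$ is bounded and, $W$ being reflexive, a subsequence (still denoted $\{p_n\}$) satisfies $p_n\rightharpoonup p^*$ in $W$. Assumption \ref{o2} then guarantees that $p^*\in U$.

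The core step is to pass to the limit in the solution map $p\mapsto(u_p,\lambda_p)$. Because $p_n\rightharpoonup p^*$ in $W$, the hypotheses of the theorem ensure that Assumptions \ref{cvA}--\ref{cvf} hold with the identifications $A_n=A_{p_n}$, $A=A_{p^*}$, $b_n=b_{p_n}$, $J_n=J_{p_n}$, $J=J_{p^*}$, $\Lambda_n=\Lambda_{p_n}$, $\Lambda=\Lambda_{p^*}$, $f_n=f_{p_n}$, $f=f_{p^*}$, while Assumptions $\ref{a4}_{p^*}$--$\ref{a10}_{p^*}$ and $\ref{a4}_{p_n}$--$\ref{a10}_{p_n}$ together with Assumptions \ref{pi} and \ref{cvpi} supply the remaining ingredients. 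Hence Theorem \ref{t2}, applied with the limit problem indexed by $p^*$ playing the role of Problem \ref{ap} and the problems indexed by $p_n$ playing the role of Problem \ref{ppn}, yields $u_{p_n}\to u_{p^*}$ strongly in $X$ and $\lambda_{p_n}\rightharpoonup\lambda_{p^*}$ weakly in $Y$. I expect this to be the main obstacle: it requires checking that every convergence assumption of Theorem \ref{t2} is genuinely inherited along the weakly convergent sequence $\{p_n\}$, and it is precisely the strong convergence of the $X$-component provided by Theorem \ref{t2} that makes the concluding step possible.

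Finally, I would conclude by weak lower semicontinuity of the cost. Taking $u_n=u_{p_n}\to u_{p^*}$ in $X$, $\lambda_n=\lambda_{p_n}\rightharpoonup\lambda_{p^*}$ in $Y$ and $p_n\rightharpoonup p^*$ in $W$, Assumption \ref{o3} gives
\[ J(p^*)={\cal L}(u_{p^*},\lambda_{p^*},p^*)\le\liminf_{n\to\infty}{\cal L}(u_{p_n},\lambda_{p_n},p_n)=\liminf_{n\to\infty}J(p_n)=d. \]
Since $p^*\in U$ forces $J(p^*)\ge d$, we obtain $J(p^*)=d=\min_{p\in U}J(p)$, so $p^*$ solves Problem \ref{o} and the proof is complete.
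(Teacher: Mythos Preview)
Your proof is correct and follows essentially the same approach as the paper: both use Theorem \ref{t2} to obtain the convergences $u_{p_n}\to u_{p^*}$ in $X$ and $\lambda_{p_n}\rightharpoonup\lambda_{p^*}$ in $Y$ along a weakly convergent sequence $p_n\rightharpoonup p^*$, then invoke Assumption \ref{o3} to pass to the lower limit in the cost. The only cosmetic difference is that the paper first packages these two ingredients as ``$J$ is weakly lower semicontinuous'' and ``$J$ is coercive (under \ref{o4})'' and then cites a Weierstrass-type theorem, whereas you unfold the Weierstrass argument directly on a minimizing sequence.
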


\begin{proof}
Assume that $\{p_n\}\subset U$ is such that $p_n\rightharpoonup p\mbox{ in }W$. Since Assumptions  $\ref{cvA}$--$\ref{cvf}$
are satisfied in the sense prescribed in the statement of Theorem \ref{t4}, we are in a position to apply Theorem
\ref{t2} in order to obtain that
$u_{p_n}\to u_p$ in $X$ and  $\lambda_{p_n}\rightharpoonup \lambda_p$ in $Y$. Therefore, using the definition (\ref{Jmn})
and Assumption \ref{o3}  we deduce that
\begin{equation*}\label{r1}
\liminf J(p_n)=\liminf {\cal L}(u_{p_n},\lambda_{p_n},p_n)\ge {\cal L}(u_{p},\lambda_{p},p)=J(p).
\end{equation*}
It follows from here that the function $J:U\to\mathbb{R}$  is weakly lower semicontinuous.

If Assumption \ref{o4} is satisfied then,
for each sequence $\{p_n\}\subset U$, using ($i_1$)
we have
\[J(p_n)={\cal L}(u_{p_n},\lambda_{p_n},p_n)\ge \psi(p_n).\]
Therefore, if $\|p_n\|_{W} \to \infty$,  using ($i_2$) we deduce that
$J(p_n)\to \infty$ which shows that
$J:U\to\mathbb{R}$ is coercive.
Recall also Assumption \ref{o2} and the reflexivity of the space $W$. The existence of at least one solution to Problem \ref{o} is now a direct consequence of a minimization theorem of the Weierstrass-type. On the other hand, if Assumption \ref{o5} is satisfied, we are still in a position to apply  a Weierstrass argument, since now we minimize the function $J$ on a bounded set and, therefore, we do not need its coercivity.
 We deduce from here that, if either Assumption \ref{o4} or Assumption \ref{o5}
holds, then there exists at least one solution $p^*\in U$ to the optimization problem (\ref{o1p}), which concludes the proof.
\end{proof}

\section{An example}\label{apli}
The results  in the previous sections can be applied to the variational analysis of various nonlinear boundary value problems. To give an example, we consider here a nonlinear boundary value problem governed by the $r$-Laplace operator.
\begin{problem}\label{PM}
Find $u:{\bar{\Omega}}\to\mathbb{R}$ such that
\begin{eqnarray*} \label{p1:1}
\mu^*\,\operatorname{div}\,(\|\nabla u(\bx)\|^{r-2}\nabla u(\bx))&=&0\qquad\qquad\mbox{\rm in }\ \Omega, \\ \label{p1:2}
u(\bx)&=&0\qquad\quad\quad\mbox{\rm on}\ \Gamma_1,\\
 \mu^*\,\|\nabla u(\bx)\|^{r-2}\,\frac{\partial u}{\partial \nu}(\bx)&=&f(\bx)\,\,\,\quad\quad\mbox{\rm on }\ \Gamma_2,
 \end{eqnarray*}
 $$
\left.\begin{array}{l}
\left|\mu^*\,\|\nabla u(\bx)\|^{r-2}\,\displaystyle\frac{\partial u}{\partial \nu}(\bx)\right|\leq \vartheta,\\
 \mu^*\,\|\nabla u(\bx)\|^{r-2}\,\displaystyle\frac{\partial u}{\partial \nu}(\bx)=-\vartheta\,\frac{u(\bx)}{|u(\bx)|}\quad \mbox{\rm if }\ u(\bx)\neq 0
 \end{array}\right\}\,\,\quad \mbox{\rm on }\ \Gamma_3,
 $$
 $$
-\mu^*\,\|\nabla u(\bx)\|^{r-2}\,\frac{\partial u}{\partial \nu}(\bx)= g j(u(\bx))\,\,\quad \mbox{\rm on }\ \Gamma_4.
 $$
  \end{problem}
Here $\Omega$ is a bounded domain in $\mathbb R^2$ with smooth boundary $\Gamma$ partitioned in four measurable parts $\Gamma_i$, such that $meas(\Gamma_i)>0$,  $i\in\{1,2,3,4\}$.  Moreover, $\bnu$ denotes the unit outward normal on $\Gamma$, $r$ is a real number such that $2\leq r< \infty$, $\mu^*$, $\vartheta$, $g$ are given constants, and $f:\Gamma_2\to\mathbb{R}$,  $j:\mathbb R\to \mathbb R$ are given functions.

 Recall that Problem \ref{PM} represents a mathematical model which describes the frictional contact of an elastic cylinder with a rigid obstacle, in the antiplane framework, $\Omega$ being the cross section of the cylinder. Here $u$ represents the axial component of the displacement field, $f$ is related to the density of the surface tractions, $\vartheta$ and $g$ are given friction bounds and $j$ is a friction potential. Moreover, $r$ and $\mu^*$ are coefficients related to the constitutive law of the material, a law of
 Hencky-type, see, e.g., \cite{MMN18}. For the particular case $r=2$ and $\Gamma_4=\emptyset$ we refer the reader to \cite{SM09}. There, details on the mathematical treatment as well as  mechanical interpretations for the antiplane frictional contact models in a setting governed by variational inequalities of the second kind can be found.

In order to study Problem \ref{PM} we consider the space
\begin{equation*}\label{defX}
X=\{v\,:\, v\in W^{1,r}(\Omega),\,\gamma v(\bx)=0\mbox{ a.e. on }\Gamma_1\}
 \end{equation*}
where $\gamma:W^{1,r}(\Omega)\to L^r(\Gamma)$ is the trace operator. As it is known,  $\gamma$ is a linear continuous and compact operator. In particular, there exists $c_{tr}>0$ such that
\begin{equation}\label{ctr}
\|\gamma v\|_{L^{r}(\Gamma)}\leq c_{tr}\|v\|_{W^{1,r}(\Omega)}\qquad{\rm for\ all}\ \  v\in W^{1,r}(\Omega).
\end{equation}
Now, since
$r\geq 2,$ according to the trace theorem, $\gamma:W^{1,r}(\Omega)\to L^s(\Gamma)$ is also a linear continuous and compact operator, for all $s\ge 1$. These properties will be used repeatedly in this section, with $s=1$ or $s=r$, even if we do not mention it explicitly.

It is known that the space $X$ is a real reflexive Banach space endowed with the norm
\[\|u\|_X=\|\nabla u\|_{{L^r(\Omega)}^{2}}.\]
Moreover, we follow \cite{MMN18} and recall that there exists $c_P=c_P(\Omega,\Gamma_1)>0$
such that
\begin{equation}\label{cp}
\|u\|_{W^{1,r}(\Omega)}\leq c_P\|\nabla u\|_{L^r(\Omega)^2}.
\end{equation}

Let $r'$ be the conjugate exponent of $r,$ i.e. $\frac{1}{r}+\frac{1}{r'}=1$ and consider the real reflexive Banach spaces
\begin{equation}\label{defY}
\qquad Y=L^{r'}(\Gamma_3),\qquad Z=L^{r}(\Gamma_2).
\end{equation}
We denote by $\langle\cdot,\cdot\rangle$  the duality pairing between $L^{r'}(\Gamma_3)$ and $L^r(\Gamma_3)$, and by $\langle\cdot,\cdot\rangle_{Z',Z}$ the duality pairing between $Z$ and its dual $Z'=L^{r'}(\Gamma_2)$.


\medskip
The analysis of Problem \ref{PM} is made under the following assumptions.
\begin{assumption}\label{fd}
$f\in Z'.$
\end{assumption}
\begin{assumption}\label{fe}
$\mu^*>0,$ $\vartheta\ge 0,$ $g\ge 0.$
\end{assumption}
\begin{assumption}\label{j} The function $j:\mathbb R\to \mathbb R$ is nondecresing, bounded of rank $M_j>0$ and Lipschitz of rank $L_{j}>0.$
\end{assumption}
To give an example fulfilling Assumption \ref{j}, we can  consider the function \[j:\mathbb R\to \mathbb R \quad j(s)=\frac{s}{\sqrt{s^2+1}}\, .\]

Note that the nonhomogeneous case, in which $\mu^*$, $\vartheta$, $g$, and $j$ depend on the spatial variable $\bx$ can be considered. Nevertheless, for simplicity, we restrict below to the homogeneous case.

Next, we use ``$\cdot$" for the inner product in $\mathbb R^2$ and $\gamma v_{|_{\Gamma_2}}$, $\gamma v_{|_{\Gamma_3}}$
for the restriction of the trace of $v\in X$ to the parts $\Gamma_2$ and $\Gamma_3$ of the boundary of $\Omega$, respectively.
We also define the operator
$A:X\to X'$,
the form $b:X\times L^{r'}(\Gamma_3)\to \mathbb{R}$, the function  $J:X\times X\to\mathbb{R}$,
the operator $\pi:Z\to Z'$ and the set $\Lambda\subset Y$ by equalities
\begin{eqnarray}\label{defA}
&&(Au,v)_{X',X}=\mu^*\int_{\Omega}\|\nabla u(\bx)\|^{r-2}\nabla u(\bx)\cdot \nabla v(\bx)\,dx\quad \mbox{ for all }u,\,v\in X, \\ [3mm]
&&\label{defb}
 b(v,\zeta)=\langle\zeta,\gamma v_{|_{\Gamma_3}}\rangle\quad \mbox{ for all }v\in X, \ \zeta\in L^{r'}(\Gamma_3), \\ [3mm]
&&J(u,v)=g\int_{\Gamma_4} j(\gamma u(\bx)) \gamma v(\bx)\,d\Gamma\quad \mbox{ for all }u,\,v\in X, \label{defJ}\\ [3mm]
&&\pi v=\gamma v_{|_{\Gamma_2}}\quad \mbox{ for all }v\in X,\label{defpi}\\ [3mm]
&&\label{defLa}
\Lambda=\{\,\zeta\in Y\,:\,\langle\zeta, \gamma v_{|_{\Gamma_3}}\rangle\leq \vartheta\int_{\Gamma_3}|\gamma v(\bx)|\,d\Gamma\quad\mbox{ for all }v\in X\,\}.
\end{eqnarray}

 By standard arguments we can deliver the following mixed variational formulation of Problem \ref{PM}.
 \begin{problem}\label{sp}  Find $(u,\lambda)\in X\times \Lambda$ such that
\begin{eqnarray*} (Au,v-u)_{X',X}+b(v-u,\lambda)+J(u,v-u)
&\geq& (f,\pi (v-u))_{Z',Z}\quad{\rm for\ all}\
     v\in X,\\
b(u,\mu-\lambda)&\leq&0\qquad\  \qquad\quad\qquad{\rm for\ all}\ \mu\in \Lambda.
 \end{eqnarray*}
\end{problem}

 We have the following existence and uniqueness result.
 \begin{theorem}\label{t8} Under Assumptions $\ref{fd}$, $\ref{fe}$ and $\ref{j}$,  Problem $\ref{sp}$ has a unique solution.
 \end{theorem}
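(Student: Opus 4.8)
The plan is to recognize Problem \ref{sp} as a particular instance of the abstract Problem \ref{ap}, with $X$, $Y$, $Z$ as introduced above and the data $A$, $b$, $J$, $\pi$, $\Lambda$ given by (\ref{defA})--(\ref{defLa}), and then simply to invoke Theorem \ref{t1}. Thus the entire argument reduces to checking that, under Assumptions \ref{fd}, \ref{fe} and \ref{j}, these concrete data satisfy each of the abstract Assumptions \ref{a4}--\ref{unic}.

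The bulk of these verifications is routine and rests on two ingredients: the classical algebraic inequalities for the $r$-Laplacian and the compactness of the trace operator. First I would set $h(w)=\mu^*c_r\|w\|_X^r$, where $c_r>0$ is the constant in the elementary inequality $(\|\xi\|^{r-2}\xi-\|\eta\|^{r-2}\eta)\cdot(\xi-\eta)\ge c_r\|\xi-\eta\|^r$ valid for $r\ge 2$: positive homogeneity of degree $r$ gives Assumption \ref{a4}$(i_1)$, the inequality itself gives $(i_2)$, and the weak lower semicontinuity of $\|\cdot\|_X$ combined with the monotonicity of $t\mapsto t^r$ gives $(i_3)$. The same $h$ yields Assumption \ref{a7} with $M=\mu^*c_r$ and $q=r\ge 2$. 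The identity $(Au,u)_{X',X}=\mu^*\|u\|_X^r$ gives the coercivity Assumption \ref{coercive}, the continuity of the Nemytskii-type map underlying (\ref{defA}) gives Assumption \ref{a4bis}, Assumption \ref{pi} holds because $\pi=\gamma(\cdot)_{|_{\Gamma_2}}$ is linear and continuous, and Assumption \ref{ef} is precisely Assumption \ref{fd}. For the bifunctional $J$ the decisive observation is that $v\mapsto J(u,v)$ is \emph{linear}, so Assumption \ref{J}$(i_1)$ holds (homogeneity and subadditivity), Assumption \ref{unic} holds with equality, and Assumption \ref{J}$(i_2)$ follows from $|j|\le M_j$ together with (\ref{ctr}). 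Using the monotonicity of $j$ one computes $J(w_1,w_2-w_1)+J(w_2,w_1-w_2)=-g\int_{\Gamma_4}[j(\gamma w_1)-j(\gamma w_2)](\gamma w_1-\gamma w_2)\,d\Gamma\le 0$, so Assumption \ref{a7bis} holds with $m=0$ and hence Assumption \ref{sa} ($M>m$) is automatic. Finally, the compactness of $\gamma$ delivers Assumption \ref{a5}$(i_1)$ (since $\gamma u_n\to\gamma u$ in $L^r(\Gamma)$) and Assumption \ref{J}$(i_3)$ (as then $\gamma u_n\to\gamma u$ and $\gamma v_n\to\gamma v$ in $L^r(\Gamma)$ and $j$ is continuous), while Assumption \ref{a5}$(i_2)$ is the plain duality statement obtained by testing the weak convergence in $L^{r'}(\Gamma_3)$ against the fixed element $\gamma v_{|_{\Gamma_3}}\in L^r(\Gamma_3)$.

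The genuinely delicate points, which I expect to be the main obstacle, are the inf-sup condition in Assumption \ref{a5}$(i_3)$ and the boundedness of $\Lambda$ required by Assumption \ref{a3bbiss}. Both reduce to the functional-analytic behaviour of the trace on $\Gamma_3$: the inf-sup estimate amounts to producing, for each $\mu\in L^{r'}(\Gamma_3)$, a test function $v\in X$ with $\langle\mu,\gamma v_{|_{\Gamma_3}}\rangle$ controlled from below by $\|\mu\|_Y\|v\|_X$, i.e.\ to a right inverse (lifting) of the trace with $X$-controlled norm. The remaining parts of Assumption \ref{a3bbiss} are easy: $\Lambda$ is an intersection of closed half-spaces, hence closed and convex, and $0_Y\in\Lambda$ because $\vartheta\ge 0$; boundedness then follows by testing the defining inequality of $\Lambda$ with $\pm v$ and again invoking the surjectivity of the trace onto a dense subspace to conclude $\|\zeta\|_Y\le\vartheta\,|\Gamma_3|^{1/r'}$. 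These trace properties are standard but technical, and I would establish them, or quote them, from the trace theory employed in \cite{MMN18}. Once all of Assumptions \ref{a4}--\ref{unic} are in place, Theorem \ref{t1} applies verbatim and yields a unique solution $(u,\lambda)$ of Problem \ref{sp}.
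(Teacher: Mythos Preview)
Your proposal is correct and follows essentially the same route as the paper's own proof: both recognize Problem \ref{sp} as an instance of the abstract Problem \ref{ap}, verify Assumptions \ref{a4}--\ref{unic} one by one using the standard $r$-Laplacian inequalities, the boundedness/monotonicity of $j$, and compactness of the trace, and then invoke the abstract uniqueness result. The only cosmetic differences are that the paper explicitly splits the appeal into Theorem \ref{maini} (existence) plus Theorem \ref{t1} (uniqueness), and that it quotes \cite{M14} rather than \cite{MMN18} for the inf-sup condition on $b$ and the closed--convex--bounded structure of $\Lambda$; your sketch of the boundedness argument for $\Lambda$ is in fact slightly more explicit than what the paper writes.
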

 \begin{proof}
We  apply Theorem  \ref{maini} (for the existence part) and Theorem \ref{t1} for (for the uniqueness part). To this end, we need to check the validity of the Assumptions \ref{a4}--\ref{unic}.

First, we use
 the basic properties of the $r$-Laplace operator (see \cite{DJM,GM75,MMN18}, for instance)  to see that
 Assumptions  \ref{a4},  \ref{a4bis},  \ref{coercive} and  \ref{a7} are fulfilled with $q=r$,
 $h(v)=\frac{\mu^*}{2^{r-2}r}\|v\|^r_X$
 and $M=\frac{\mu^*}{2^{r-2}r}.$
 Next, we follow \cite{M14} to recall that
 Assumptions \ref{a5} and \ref{a3bbiss} are satisfied. Moreover, using the properties of the trace operator  we see that
 Assumption  \ref{pi}  holds, too.

 Let us now verify  Assumption \ref{J}.
 First, condition $(i_1)$ is obviously satisfied. Next, we use the definition of $J$,  the properties of the trace operator and Assumption \ref{j} to see that
 \begin{eqnarray*}
 	&&|J(u,v)|\leq g M_j\int_{\Gamma_4}|\gamma v(\bx)|\,d\Gamma\le g M_j c_{tr} c_P\,{\rm meas}(\Gamma_4)^{\frac{1}{r'}}\|v\|_X,
 \end{eqnarray*}
for all $u,\, v\in X$. We conclude from here that condition $(i_2)$ in
Assumption \ref{J} is satisfied with
 $c=g M_j c_{tr} c_P\,{\rm meas}(\Gamma_4)^{\frac{1}{r'}}$. Recall that $M_j$, $c_{tr}$  and $c_P$ are the constants which appear in Assumption \ref{j}, (\ref{ctr}) and (\ref{cp}), respectively. Moreover, since
 \[J(u_n,v_n)-J(u,v)=g\left[\int_{\Gamma_4}\left(j(\gamma u_n)-j(\gamma u)\right)\gamma v_n\,d\Gamma-\int_{\Gamma_4} j(\gamma u)(\gamma v_n-\gamma v)\,d\Gamma\right],\]
  by using Assumption \ref{j}, as $\gamma$ is completely continuous (being linear and compact), we deduce that, if $u_n\rightharpoonup u$ and $v_n\rightharpoonup v$ in $X,$ then $J(u_n,v_n)\to J(u,v)$. Thus,
the point $(i_3)$  in Assumption \ref{J}  holds, too.

 Using  again Assumption \ref{j}, it follows that Assumption \ref{a7bis} is satisfied with $m=0.$ Obviously, Assumption \ref{sa} is fulfilled.
Finally, we observe that
\[J(u,v)+J(u,-v)=g\int_{\Gamma_4}j(\gamma u(\bx))\gamma v(\bx)\,d\Gamma-g\int_{\Gamma_4}j(\gamma u(\bx))\gamma v(\bx)\,d\Gamma=0\]
which shows that Assumption \ref{unic} is fulfilled, too. We also remark that Assumptions \ref{ef} and \ref{pi} are obviously satisfied.
Theorem \ref{t8} is now a direct consequence of Theorems \ref{maini} and \ref{t1}.
\end{proof}


 The solution of Problem \ref{sp} depends on the data $f$, $\vartheta$ and $g$ and, therefore, we denote it in what follows by $\big(u(f,\vartheta,g),\lambda(f,\vartheta,g)\big)$. Its dependence with respect to these data is provided by the following result.

 \begin{theorem} Let $\{f_{n}\}\subset Z'$, $\{\vartheta_n\}\subset [0,+\infty)$, $\{g_n\}\subset [0,\infty)$ be three sequences such that
 \begin{equation}\label{cv10}
 f_{n}\rightharpoonup f\quad{\rm in}\quad L^{r'}(\Gamma_2),\quad \vartheta_n\to \vartheta\quad{\rm in}\quad\mathbb R,\quad g_n\to g\quad{\rm in}\quad\mathbb R.
 \end{equation}
 Then,  under Assumptions $\ref{fd}$--$\ref{j}$, the following convergence hold:
 \begin{eqnarray}
 &&\label{cv11}
 u(f_{2n},\vartheta_n,g_n)\to u(f_2,\vartheta,g) \quad{\rm in}\quad X,\\ [2mm]
  &&\label{cv12}
  \lambda(f_{2n},\vartheta_n,g_n)\rightharpoonup \lambda(f_2,\vartheta,g) \quad{\rm in}\quad Y.
 \end{eqnarray}
 \end{theorem}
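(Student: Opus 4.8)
The plan is to read this as a direct application of the abstract convergence result, Theorem \ref{t2}, to the concrete data of Problem \ref{sp}. First I would isolate which of the ingredients $A$, $b$, $J$, $f$, $\Lambda$ depend on the perturbed parameters. Inspecting the definitions (\ref{defA})--(\ref{defLa}), the operator $A$, the bilinear form $b$ and the operator $\pi$ do not involve $f$, $\vartheta$ or $g$; they stay fixed and serve simultaneously as the limit data and, trivially, as the perturbed data. Only three objects vary with $n$: the functional $J_n(u,v)=g_n\int_{\Gamma_4}j(\gamma u)\,\gamma v\,d\Gamma$ (through $g_n$), the element $f_n$, and the constraint set $\Lambda_n$ obtained from (\ref{defLa}) by replacing $\vartheta$ with $\vartheta_n$. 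The verification carried out in the proof of Theorem \ref{t8} already shows that Assumptions \ref{a4}--\ref{a10} hold for the limit data; rereading that verification with $g_n$, $\vartheta_n$, $f_n$ in place of $g$, $\vartheta$, $f$ shows that Assumptions \ref{a4}$_n$--\ref{a10}$_n$ hold with the explicit constants $q_n=r$, $M_n=\mu^*/(2^{r-2}r)$, $m_n=0$, $\alpha_n=\alpha$ and $c_n=g_nM_jc_{tr}c_P\,{\rm meas}(\Gamma_4)^{1/r'}$, none of which depends on $n$ except through $g_n$.

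The bulk of the remaining work is to check the convergence Assumptions \ref{cvA}--\ref{cvpi}. Several are immediate. Since $A_n=A$, Assumption \ref{cvA} holds with $F_n=0$ and $\delta_n=0$. Since $M_n-m_n=M>0$, $q_n=r$ and $\alpha_n=\alpha$ are constant in $n$, Assumptions \ref{bA}, \ref{qn} and \ref{bb} hold with $M_0=M$, $\widetilde{q}=r$ and $\alpha_0=\alpha$; and because $g_n\to g$ the sequence $\{c_n\}$ is bounded, which gives Assumption \ref{bA2}. Assumption \ref{cvf} is exactly the hypothesis $f_n\rightharpoonup f$ in $Z'$, and Assumption \ref{cvpi} is the complete continuity of $\pi=\gamma_{|_{\Gamma_2}}$, which follows from the compactness of the trace. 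For Assumption \ref{cvb} I use that $b_n=b$ is fixed together with the fact that, by compactness of $\gamma$, a weak convergence $z_n\rightharpoonup z$ in $X$ forces $\gamma z_n\to\gamma z$ strongly in $L^r(\Gamma_3)$, whence $b(w-z_n,\mu_n)\to b(w-z,\mu)$ whenever $\mu_n\rightharpoonup\mu$ in $Y$. To verify Assumption \ref{JJJJ} I split $J_n(u,v)-J(u,v)=(g_n-g)\int_{\Gamma_4}j(\gamma u)\,\gamma v\,d\Gamma$ and bound it by means of $|j|\le M_j$, $g_n\to g$ and the compactness of the trace; the weak lower semicontinuity of $v\mapsto J(u,v)$ is obtained from the same compactness argument already used for Assumption \ref{J}$(i_3)$ in the proof of Theorem \ref{t8}.

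The point that requires genuine work is Assumption \ref{cvLa}, the Mosco convergence $\Lambda_n\to\Lambda$, and here the positive homogeneity of the constraint in (\ref{defLa}) is the decisive structural feature. For condition $(i_1)$, given $\mu\in\Lambda$ I set $\mu_n=(\vartheta_n/\vartheta)\,\mu$ when $\vartheta>0$ (and $\mu_n=0_Y$ when $\vartheta=0$, in which case $\Lambda=\{0_Y\}$); the identity $\langle\mu_n,\gamma v_{|_{\Gamma_3}}\rangle=(\vartheta_n/\vartheta)\langle\mu,\gamma v_{|_{\Gamma_3}}\rangle\le\vartheta_n\int_{\Gamma_3}|\gamma v|\,d\Gamma$ shows $\mu_n\in\Lambda_n$, while $\vartheta_n/\vartheta\to1$ gives the strong convergence $\mu_n\to\mu$ in $Y$. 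For condition $(i_2)$, if $\mu_n\in\Lambda_n$ and $\mu_n\rightharpoonup\mu$ in $Y$, then fixing $v\in X$ and passing to the limit in $\langle\mu_n,\gamma v_{|_{\Gamma_3}}\rangle\le\vartheta_n\int_{\Gamma_3}|\gamma v|\,d\Gamma$ --- using $\mu_n\rightharpoonup\mu$ in $Y=L^{r'}(\Gamma_3)$ tested against the fixed element $\gamma v_{|_{\Gamma_3}}\in L^r(\Gamma_3)$ together with $\vartheta_n\to\vartheta$ --- yields $\langle\mu,\gamma v_{|_{\Gamma_3}}\rangle\le\vartheta\int_{\Gamma_3}|\gamma v|\,d\Gamma$, i.e. $\mu\in\Lambda$. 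With every hypothesis of Theorem \ref{t2} thus in force, that theorem delivers at once the strong convergence (\ref{cv11}) and the weak convergence (\ref{cv12}), which completes the proof.
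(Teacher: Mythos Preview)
Your proposal is correct and follows exactly the same route as the paper: apply Theorem \ref{t2} with $A_n=A$, $b_n=b$, and only $J_n$, $f_n$, $\Lambda_n$ varying, then verify Assumptions \ref{cvA}--\ref{cvpi} one by one. In fact you supply more detail than the paper does at two points---the verification of Assumption \ref{cvb} via compactness of the trace, and especially the Mosco convergence (Assumption \ref{cvLa}), where the paper merely asserts it is ``a consequence of the definitions'' while you give the explicit scaling $\mu_n=(\vartheta_n/\vartheta)\mu$ and treat the degenerate case $\vartheta=0$ separately. One minor slip: Assumption \ref{a10} (local Lipschitz continuity of $A$) is not actually verified in the proof of Theorem \ref{t8}; it is a separate property of the $r$-Laplacian that the paper invokes directly from the literature, so your attribution there is slightly off, though the claim itself is correct.
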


 \begin{proof}
We use Theorem \ref{t2} with $A_n=A$, $b_n=b$ and $J_n$, $\Lambda_n$ defined by
 	 \begin{eqnarray*}
 	&&J_n(u,v)=g_n\int_{\Gamma_4} j(\gamma u(\bx))\gamma v(\bx)\,d\Gamma\quad \mbox{ for all }u,\,v\in X, \\ [3mm]
 	&&\label{defL}
 	\Lambda_n=\{\,\zeta\in Y\,:\,\langle\zeta, \gamma v_{|_{\Gamma_3}}\rangle\leq \vartheta_n\int_{\Gamma_3}|\gamma v(\bx)|\,d\Gamma\quad\mbox{ for all }v\in X\,\}.
 \end{eqnarray*}	

Let us verify the validity of Assumptions \ref{a10}--\ref{cvpi}.

 First, we note that Assumption	\ref{a10} holds from the properties of the $r$-Laplacian operator, as shown in \cite{DJM,GM75}, for instance.
 Next, we note that, since $A_n=A$ and $b_n=b$, the  Assumptions \ref{cvA}, \ref{qn}, \ref{cvb}, \ref{bb} are obviously satisfied.
Moreover,  Assumption \ref{cvf} is a consequence of (\ref{cv10}) and Assumption \ref{cvpi} follows from the properties of the trace operator.

 Next  Assumption \ref{JJJJ} is a consequence of  Assumption \ref{j}, the properties of the trace operator and the convergence $g_n\to g$ in (\ref{cv10}).
 Indeed, by the boundedness of $j$ and the completely continuity of the trace operator,  it follows that for each $u\in X$, the function  $J(u,\cdot):X\to\mathbb{R}$  is weakly continuous. Assume now that  $u_n\rightharpoonup \widetilde{u}$ in $X$.  Then, using Assumption $\ref{j}$, for  each $n\in\mathbb{N}$ we have
  \begin{eqnarray*}
  &&|J(\widetilde{u},u_n-\widetilde{u})-J_n(\widetilde{u},u_n-\widetilde{u})|\\
  &&\quad\le|g-g_n|\int_{\Gamma_4}|j(\gamma \widetilde{u}(\bx)||\gamma u_n(\bx)-\gamma \widetilde{u}(\bx)|\,d\Gamma\\
   &&\qquad\leq M_j|g_n-g|\|\gamma \widetilde{u}-\gamma u_n\|_{L^1(\Gamma_4)}.
  \end{eqnarray*}
  Therefore,  the completely continuity of the trace operator  $\gamma: W^{1,r}(\Omega)\to L^1(\Gamma)$ combined with the convergences
  $u_n\rightharpoonup \widetilde{u}$ in $X$  and $g_n\to g,$ in $\mathbb{R}$ implies that
  \[J(\widetilde{u},u_n-\widetilde{u})-J_n(\widetilde{u},u_n-\widetilde{u})\to 0.\]
  By a similar argument,
  \begin{eqnarray*}
  &&|J_n(u_n,v-u_n)-J(u_n,v-u_n)|\\
  &&\qquad=|g_n-g|\int_{\Gamma_4}|j(\gamma u_n(\bx)| |\gamma v(\bx)-\gamma u_n(\bx)|\,d\Gamma\\
 &&\qquad\leq M_j|g_n-g|\|\gamma v-\gamma u_n\|_{L^1(\Gamma_4)}.
  \end{eqnarray*}
 Hence,  we are led to
  \[J_n(u_n,v-u_n)-J(u_n,v-u_n)\to 0.\]
  We conclude from here that  Assumption \ref{JJJJ} is verified.

  Moreover, Assumptions \ref{bA}  and \ref{bA2} are verified  since
  \[M_n=M=\frac{\mu^*}{2^{r-2}r},\quad m_n=0,
  \quad c_n=g_n M_j c_{tr} c_P\,{\rm meas}(\Gamma_4)^{\frac{1}{r'}},\] for each $n\in\mathbb{N}$. Note that Assumption \ref{qn} also holds, since $q=q_n=r$.

  On the other hand, Assumption  \ref{cvLa} is a consequence of the definitions of the sets $\Lambda_n$ and $\Lambda$, combined with the convergence $\vartheta_n\to \vartheta$ in (\ref{cv10}). Finally, Assumption \ref{cvf} is a consequence of the convergence $f_n\rightharpoonup f$ in (\ref{cv10}) and Assumption \ref{cvpi} is obviously satisfied.

It follows from above that we are in a position to apply Theorem \ref{t2} and, in this way, we deduce that the convergences (\ref{cv11}), (\ref{cv12}) hold, which concludes the proof.
 \end{proof}

 Besides the mathematical interest in the convergence results  (\ref{cv11}) and (\ref{cv12}), this results is important from mechanical point of view since it provides   the continuous dependence of the weak solution of Problem \ref{PM} with respect to the densities of the surface tractions, and the friction bounds.

We now provide three examples of optimization problems associated to  Problem \ref{sp} for which the existence result in Theorem \ref{t4} works.  Everywhere below we assume that $\mu^*>0$ and  $r\ge 2$ are given. In addition, the function $j$ is given as well, and satisfies Assumption \ref{j}.
The three problems we consider below
have a common feature and can be casted in the following general form.

\begin{problem}\label{opm}  Find  $p^*\in U$ such that
	\begin{equation}\label{o1m}
	{\cal L}(u_{p^*},\lambda_{p^*},p^*)=\min_{ p\in {U}} {\cal L}(u_p,\lambda_p,p).
	\end{equation}
\end{problem}

Here $U$ is a subset of a reflexive Banach space $W$,  ${\cal L}:X\times Y\times U \to\mathbb{R}$ is the cost functional and, for each $p\in U$,   $(u_p,\lambda_p)$ represents the solution of Problem \ref{sp} in which part of the data are related to the parameter $p$.
Both the set $U$, the space $W$, the functional ${\cal L}$  and the mapping $p\mapsto(u_p,\lambda_p)$
will change from example to example and, therefore,  will be described below.

\begin{example} Let $W=Y\times \mathbb R\times \mathbb R$, $U=Y\times[0,+\infty)\times[0,+\infty)$,
\begin{equation}
	{\mathcal L}(u,\lambda,p)={\alpha}\,\|u-u_d\|^r_X+{\beta}\,\|\lambda-\lambda_d\|^{r'}_Y+	{\delta}\,\|p\|^2_W,
\end{equation}
where $\alpha,\beta,\delta>0$ are given constants and
$(u_d,\lambda_d)\in X\times Y$ is a given optimal target. An element $p\in U$ is of the form $p=(f,\vartheta,g)$ and $(u_p,\lambda_p)$ denotes the solution of Problem $\ref{sp}$ with the data
$f,\,\vartheta,\, g$, $\mu^*$, $j$ and $r$. It is easy to see that in this case
Assumptions $\ref{o2},$  $\ref{o3}$  and
$\ref{o4}$ are satisfied.
Therefore, using Theorem $\ref{t4}$ we deduce the existence of at least one solution for the corresponding optimization problem $(\ref{o1m})$.
\end{example}

\begin{example} In this example, besides $\mu^*$, $j$ and $r$, we assume that  $\vartheta\ge0$ and $g\ge 0$ and are given. Let $W=U=Y$,
	\begin{equation}
	{\mathcal L}(u,\lambda,p)={\alpha}\,\|\gamma u_{|_{\Gamma_2}}-u_d\|_Z^2+
	{\delta}\,\|f\|^2_{Z'},
	\end{equation}
	where $\alpha,\delta>0$ are positive constants
	and $u_d\in Z$ is a given optimal target. An element $p\in U$ is of the form $p=f$ and $(u_p,\lambda_p)$ denotes the solution of Problem $\ref{sp}$ with the data
	$f$,\,$\vartheta,\, g,$ $\mu^*$, $j$ and $r$. It is easy to see that in this case
	Assumptions $\ref{o2},$  $\ref{o3}$  and
	$\ref{o4}$ are satisfied.
	Therefore, using Theorem $\ref{t4}$ we deduce the existence of at least one solution for the corresponding optimization problem $(\ref{o1m})$.
\end{example}

\begin{example} In this example, besides $\mu^*$, $j$ and $r$, we assume that $f\in Z'$, $\vartheta\ge 0$ and $\widetilde{g}>0$ are given. Let $W=\mathbb R$, $U=[0,\widetilde{g}]$,
	\begin{equation}\label{L}
	{\mathcal L}(u,\lambda,p)=\|\gamma u_{|_{\Gamma_3}}-u_d\|_Y
	\end{equation}
	where $u_d\in Y$ is a given target.
	An element $p\in U$ is of the form $p=g$ and $(u_p,\lambda_p)$ denotes the solution of Problem $\ref{sp}$ with the data $f$,\,$\vartheta,\, g$, $\mu^*$, $j$ and $r$. It is easy to see that in this case
	Assumptions $\ref{o2}$,  $\ref{o3}$  and
	$\ref{o5}$ are satisfied.
	Therefore, using Theorem $\ref{t4}$ we deduce the existence of at least one solution for the corresponding optimization problem $(\ref{o1m})$.
\end{example}

\vskip 10mm

\noindent\textbf{Acknowledgements} This project has received funding from the European Union's Horizon 2020
Research and Innovation Programme under the Marie Sklodowska-Curie
Grant Agreement No 823731 CONMECH.

\end{document}